\documentclass[12pt]{amsart}
\usepackage[utf8]{inputenc}
\usepackage[centering,hmargin=1in, tmargin=1.0in, bmargin=1.0in]{geometry}
\usepackage[alphabetic]{amsrefs}
\usepackage[colorinlistoftodos]{todonotes}
\usepackage{amsmath, amssymb, comment, tikz-cd, todonotes, xcolor, hyperref}

\makeatletter
\def\@settitle{\begin{center}%
  \baselineskip14\p@\relax
  \bfseries
  \uppercasenonmath\@title
  \@title
  \ifx\@subtitle\@empty\else
     \\[1ex]\uppercasenonmath\@subtitle
     \footnotesize\mdseries\@subtitle
  \fi
  \end{center}%
}
\def\subtitle#1{\gdef\@subtitle{#1}}
\def\@subtitle{}
\makeatother

\theoremstyle{plain}
\newtheorem{lemma}{Lemma}[section]
\newtheorem{theorem}[lemma]{Theorem}

\newtheorem{corollary}[lemma]{Corollary}
\newtheorem{proposition}[lemma]{Proposition}

\theoremstyle{definition}
\newtheorem{definition}[lemma]{Definition}

\theoremstyle{remark}
\newtheorem*{remark}{Remark}

\DeclareMathOperator{\rank}{\text{rank}}
\DeclareMathOperator{\Map}{\text{Map}}
\DeclareMathOperator{\sSet}{\textbf{sSet}}
\DeclareMathOperator{\sk}{\textit{sk}}

\newcommand{\Z}{\mathbb{Z}}

\newcommand{\im}{\textup{im}}
\renewcommand{\k}{\mathbf{k}}

\begin{document}

\title{Finding Bounded Simplicial Sets with Finite Homology}
% \subtitle{SPUR Final Paper, Summer 2021}

\author{Preston Cranford}
\address{Massachusetts Institute of Technology, Cambridge, MA 02139, USA}
\email{prestonc@mit.edu}

\author{Peter Rowley}
\address{Massachusetts Institute of Technology, Cambridge, MA 02139, USA}
\email{rowley@mit.edu}

% \dedicatory{Mentor: Robert Burklund\\ Project suggested by Robert Burklund\\August 4, 2021}

%\date{June 2021}

\maketitle

\begin{abstract}
    The central problem in computational algebraic topology is the computation of the homotopy groups of a given space, represented as a simplicial set. Algorithms have been found which achieve this, but the running times depend on the size of the input simplicial set. In order to reduce this dependence on the simplicial set chosen, we describe in this paper a procedure which, given a prime $p$ and a finite, simply-connected simplicial set with finite integral homology, finds a $p$-locally equivalent simplicial set with size upper bounded by a function of dimension and homology. Using this in conjunction with the above algorithm, the $p$-local homology can be calculated such that the running time dependence on the size of the initial simplicial set is contained in a separate preprocessing step.
    %We describe a procedure that inputs a finite, simply-connected simplicial set with finite integral homology and outputs a simplicial set with size upper bounded by a function of the highest non-trivial homology degree and total homology rank of the simplicial set.
\end{abstract}

\section{Introduction}

An important area of algebraic topology is the study of maps into and out of a given space $X$. In the process of understanding such maps, one of the most fundamental questions that can be asked surrounds the structure of all maps from spheres into $X$, or the study of the homotopy groups of $X$. However, despite being so fundamental, and despite their ties to homology through results such as the Hurewicz theorem, the homotopy groups of a space have proven to be extremely difficult to calculate. Because of this, the calculation of homotopy groups has become one of the central problems of computational algebraic topology.

% As such, there has been some effort put into the study of algorithms which calculate the homotopy groups of a given space. In \cite{Brown} Brown demonstrates an algorithm which finds the homotopy groups of a simply-connected simplicial set by constructing a Postnikov tower of the input space. More recently, \v{C}adek et. al. \cite{alg} presented a way to equip a simplicial set with additional data called "polynomial time homology," which can be carried through Brown's algorithm in order to save time on calculations involving global data such as homology. In fact, for a fixed value of $k$, the resulting algorithm calculates the $k$th homotopy group of a space in polynomial time in the size of the starting simplicial set.

Despite the difficulty, there do exist algorithms which compute the homotopy groups of a simply connected space which represents a finite homotopy type. When algorithmically computing homotopy groups, a topological space is represented in the computer as a simplicial set. There are various algorithms for doing this, but in large part they are all variations of an algorithm due to Brown \cite{Brown}. His algorithm calculates the homotopy groups of a simply connected space by inductively computing its Postnikov tower.

However, his algorithm is by no means practical in terms of its running time. In fact, the problem of computing $\pi_k(X)$ for a space $X$ has been shown to be W[1]-hard \cite{w1} when $k$ is regarded as an input parameter, which implies that a fast general algorithm almost certainly does not exist when varying $k$. However, by regarding $k$ as fixed, \v{C}adek et. al. \cite{alg} found an algorithm which computes $\pi_k(X)$ in time polynomial in the size of the input simplicial set $X$. Their algorithm uses Brown's as a backbone, but they associate additional data with each intermediate simplicial set, called ``polynomial time homology," which allows each of the steps to be performed in polynomial time.

Though this result is impressive, their polynomial time guarantee is slightly unsatisfactory due to its dependence on the size of the input simplicial set. In particular, this quantity has no relation to the space represented by the simplicial set, and in fact could be arbitrarily large for even the simplest underlying spaces. As such, it would be desirable to find an algorithm which, given any simplicial set, outputs another homotopy equivalent simplicial set which has size that is bounded in terms of quantities related to the space represented by the simplicial set.

Our main result achieves this goal $p$-locally for any prime $p$ with the restriction that the input space $X$ must be finite-dimensional with finite integral homology groups. The bound is given in terms of various quantities that are derived from the dimension and homology groups of the $p$-localization of $X$. To start, we use the dimension $d$ of $X$. We will also use the following quantities.

\begin{definition} Given $X$ as above and a prime $p$, we denote
\begin{equation*}
h_p:=\sum_{k=1}^{d} \rank_p{H_k(X; \Z_{(p)})}.
\end{equation*}
This quantity, the total rank of the $\Z_{(p)}$-homology of $X$, is used as a bound on the overall size of the $p$-local homology. In addition, let
\[h:=\max_p h_p.\]
Next, recall that an abelian group $A$ has $p$-torsion exponent $b$ if $b$ is the smallest integer such that $p^b$ annihilates all $p$-torsion elements of $A$. Let $m_p$ be the sum of the $p$-torsion exponents of $H_k(X)$ over the range $0\leq k\leq d$. Intuitively, this measures how much $p$-torsion is in the homology of $X$. Then, analogously to the definition of $h$, let
\[m:=\max_p m_p.\]
 Finally, let
\[N:=\prod_{p:h_p\neq0} p.\]
We use this as a measure of the diversity and size of the primes in the homology of $X$.
\end{definition}
\begin{remark}
Though all of these quantities are functions of $X$, this dependence is left implicit, as $X$ is typically clear from the context.
\end{remark}

Given these, the main result is as follows.

% Though the polynomiality guarantee of their algorithm is impressive, it is slightly unsatisfactory due to its dependence on the size of the simplicial set that was given as an input into the algorithm. This dependence means that the performance of the algorithm relies on a quantity which has nothing to do with the homotopy type of the input space, and in theory could be arbitrarily large even for the simplest spaces. As such, we believed it would be helpful to construct an algorithm which outputs a simplicial set of bounded size which is homotopy equivalent to the initial simplicial set, so that algorithmically, the dependence on the size of the input is entirely in one self-contained preprocessing step.

% Our main result is a step toward a general preprocessing algorithm such as this, as follows. 
%[Theorem \ref{thm:main}]
\begin{theorem}[Theorem \ref{thm:main}]\label{thm:intro_main}
Let $X$ be a finite, simply-connected simplicial set of dimension $d$ with finite integral homology, and let $p$ be a prime. Then there exists an algorithm giving a simplicial set which is $p$-locally homotopy equivalent to $X$ with at most
\begin{equation*}
    \exp\left(mh\log(N)\exp\left(\log(2h)d+O\left(\log(d)^3\right)\right)\right)
\end{equation*}
non-degenerate simplices.
\end{theorem}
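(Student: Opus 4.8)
The plan is to prove this by constructing the desired simplicial set as a minimal CW-like model built from Postnikov-type data, then counting simplices cell by cell. The natural strategy: first $p$-localize $X$ to obtain a space $X_{(p)}$ whose homology is the $\Z_{(p)}$-homology with total rank $h_p$ and $p$-torsion exponents summing to $m_p$; then build, stage by stage up to dimension $d$, a small model using the fact that over $\Z_{(p)}$ each homology group is a finite direct sum of cyclic $p$-groups (at most $h_p$ of them) of exponent at most $p^{m_p}$. The key observation driving the bound is that a minimal simplicial model of an Eilenberg–MacLane space $K(\Z/p^b, n)$, or a finite stage of a Postnikov tower, has a number of non-degenerate simplices that can be controlled by a tower of exponentials in $n$ (and polynomially in $b$, $p$), and that gluing $d$ such stages together composes these bounds multiplicatively, producing the nested $\exp(\cdots \exp(\log(2h)d + O(\log d)^3))$ shape.

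First I would reduce to the case where $X$ is replaced by a minimal simplicial set (or a reduced CW complex) with the same $p$-local homotopy type; here one invokes the standard fact that any simply-connected finite-dimensional space with finite homology admits a model whose cell structure in each dimension $k$ is governed by a minimal generating set of $H_k(X;\Z_{(p)})$. Second, I would set up the inductive construction of a Postnikov-style tower $\{P_n\}$ with $P_1 = *$ and $P_n$ obtained from $P_{n-1}$ by attaching a fibration with fiber $K(\pi_n, n)$, keeping track at each stage of a simplicial model of bounded size; crucially one uses the effective version of the algorithm of \v{C}adek et al.\ (polynomial-time homology) not for its running time but for the \emph{size} guarantees it provides on the intermediate models. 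Third, I would estimate the size of each $K(A,n)$-stage where $A$ ranges over the cyclic summands: the standard bar construction model of $K(\Z/p^b,n)$ has roughly $\exp(\text{poly}(b)\cdot\binom{?}{n})$-many non-degenerate simplices, and taking products over the $\le h_p$ summands multiplies the exponents, while climbing from dimension $n$ to dimension $n+1$ squares (or exponentiates) the count — iterating $d$ times is what yields the inner factor $\exp(\log(2h)d + O(\log d)^3)$, with the $\log(2h)$ coming from the base of the iterated exponential being proportional to $h$ and the $O(\log d)^3$ absorbing the lower-order overhead in the recursion (e.g.\ the cost of simplicial subdivision or normalization at each stage). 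Fourth, I would assemble the outer factor: the $mh\log(N)$ prefactor inside the outermost $\exp$ records that we must account for $\log N = \sum_{p:h_p\ne0}\log p$ worth of prime information, $h$ worth of homological rank across all the stages, and $m$ worth of torsion exponent, each entering multiplicatively in the exponent.

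The main obstacle I anticipate is controlling the size blow-up in the Postnikov induction: each stage requires a simplicial model of a fibration classified by a $k$-invariant in $H^{n+1}(P_{n-1};\pi_n)$, and naive models (pullbacks of path-loop fibrations, or twisted Eilenberg–Zilber constructions) can blow up much faster than the claimed tower-of-exponentials rate unless one is careful to re-minimize — or at least re-normalize — the model after each attachment. The delicate point is to show the re-minimization at dimension $n$ can be done with only a $\text{poly}(\log d)$ loss in the exponent rather than a constant-factor loss (which would compound to a genuinely new exponential layer over $d$ stages); this is precisely where the $O(\log(d)^3)$ error term is spent, and making that accounting rigorous — tracking how the simplex count of the re-normalized model depends on the count of the un-normalized one through the dimension-$n$ skeleton — is the technical heart of the argument. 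A secondary, more routine obstacle is bookkeeping the $p$-local coefficient issues: ensuring that working over $\Z_{(p)}$ rather than $\Z$ does not introduce infinitely generated homology at any stage (it does not, since $X$ has finite integral homology and localization is exact), and that the torsion-exponent bound $m_p$ genuinely caps the exponents appearing in every $\pi_n$ via the Hurewicz and Serre spectral sequence comparisons. I expect these to follow from standard localization theory once the size recursion is in place.
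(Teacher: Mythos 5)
Your plan shares the paper's overall architecture (Postnikov tower, Eilenberg--MacLane building blocks, multiplicative size accounting), but it has two genuine gaps, one of which you have mislocated as the ``technical heart.'' First, there is no re-minimization step in the actual argument, and the $O(\log(d)^3)$ term does not pay for one. The paper uses the completely standard simplicial models $K(\pi,n)_\ell=Z^n(\Delta^\ell;\pi)$ and $E(\pi,n)_\ell=C^n(\Delta^\ell;\pi)$ with no attempt to shrink them; the pullback construction gives $P_k\subset E(\pi_2(X),2)\times\cdots\times E(\pi_k(X),k)$, and since $|K(\pi,k)_\ell|=|\pi|^{\binom{\ell}{k}}$, the raw product through level $d+2$ already fits inside the claimed doubly-exponential budget. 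The $O(\log(d)^3)$ term instead comes from the quantitative bound $\rank(\pi_n(X_{(p)}))\leq f(n)h_p^n$ with $f(n)\in\exp(O(\log(n)^3))$, which, together with the torsion-exponent bound $p^{2m_pn}\pi_n(X_{(p)})=0$, yields $|\pi_n(X)|\leq\exp(m\log(N)\exp(n\log(h)+O(\log(n)^3)))$. You dismiss exactly these inputs as ``routine'' consequences of ``Hurewicz and Serre spectral sequence comparisons,'' but they are the nontrivial external results on which the whole bound rests; without them you cannot convert homology data $(h_p,m_p)$ into a bound on $|\pi_n|$, and the re-minimization machinery you propose to develop in their place is both unnecessary and (as you concede) not something you know how to carry out.

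Second, your plan stops at the tower and never extracts a finite simplicial set that is actually $p$-locally equivalent to $X$. The Postnikov stage $P_{d+1}$ is only levelwise finite, and its $(d+2)$-skeleton is a homology isomorphism target only through degree $d+1$; one must further delete a carefully chosen set of $(d+2)$-simplices (those whose boundaries form a $\Z_{(p)}$-basis of $\im\partial_{d+2}$) so that $H_{d+1}$ and $H_{d+2}$ of the truncation vanish, and then invoke the criterion that a map inducing $\Z_{(p)}$-homology isomorphisms in all degrees induces an equivalence of $p$-localizations. This deletion step, and the verification that the resulting map is a $\Z_{(p)}$-homology isomorphism in every degree, is an essential part of the proof that your proposal omits entirely.
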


\begin{remark}
Though $p$-local equivalence is not as strong as full homotopy equivalence, we note that if $X$ has finite homology groups, it is is still sufficient to find the homotopy groups of the space. The procedure simply involves localizing at one prime at a time and using the above result for each. Since $X$ has finite dimension and finite homology groups, this only involves localizing at finitely many primes.
\end{remark}

Our main theorem implies the following result for a simply-connected, finite homotopy type with finite integral homology.

\begin{corollary}[Corollary \ref{cor:main}]\label{cor:intro_main}
Let $X$ be a simply-connected, finite homotopy type with finite integral homology whose highest non-trivial homology is in degree $d$. Then there exists a finite simplicial set $S$ representing a space $p$-locally equivalent to $X$ with at most
\[\exp\left(mh\log(N)\exp\left(\log(2h)(d+1)+O\left(\log(d)^3\right)\right)\right).\]
non-degenerate simplices.
\end{corollary}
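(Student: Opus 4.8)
The plan is to reduce the statement to Theorem~\ref{thm:intro_main}. That theorem applies to finite simplicial sets and its bound grows with the dimension, so the essential task is to replace $X$ by a homotopy equivalent finite simplicial set of dimension at most $d+1$; granting this, the corollary follows by applying the theorem and observing that the homology-derived quantities $h$, $m$, and $N$ are unchanged.

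First I would recall the classical fact that a simply-connected finite homotopy type $X$ whose integral homology vanishes above degree $d$ is homotopy equivalent to a finite CW complex of dimension at most $d+1$. Since $X$ is a finite homotopy type it is homotopy equivalent to some finite CW complex $K$; if $\dim K = n > d+1$ then $H_n(K) = H_{n-1}(K) = 0$, which forces the top cellular boundary map $\partial_n \colon C_n(K) \to C_{n-1}(K)$ to be split injective: its image equals $\ker\partial_{n-1}$ (as $H_{n-1}(K)=0$), which is a direct summand of the free abelian group $C_{n-1}(K)$. A standard cell-trading argument then cancels the $n$-cells against a free summand of the $(n-1)$-cells, producing a homotopy equivalent finite CW complex of dimension $n-1$; since $K$ is simply connected and $n-1 \ge d+1 \ge 3$ this is legitimate. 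Iterating, one reaches a finite CW complex of dimension $\le d+1$, which is in turn homotopy equivalent to a finite simplicial complex, and hence to a finite simplicial set $X'$, of the same dimension, with $\pi_1(X') = 0$, finite integral homology, and $H_k(X') \cong H_k(X)$ for all $k$. This reduction --- obtaining a model that is simultaneously finite, simply connected, and of dimension $\le d+1$ --- is the only substantive point, and is where I would expect to spend the effort (carefully citing the cell-trading and CW-to-simplicial comparison results).

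With such an $X'$ in hand, I would apply Theorem~\ref{thm:intro_main} to it. This yields a simplicial set $S$ that is $p$-locally homotopy equivalent to $X'$, and therefore to $X$, with at most
\[
\exp\left(m'h'\log(N')\exp\left(\log(2h')d'+O\left(\log(d')^3\right)\right)\right)
\]
non-degenerate simplices, where $d' = \dim X' \le d+1$ and $h'$, $m'$, $N'$ are the quantities of the preceding definition evaluated on $X'$. Because each of $h'$, $m'$, $N'$ depends only on the integral homology groups of $X'$, which coincide with those of $X$, and because $H_{d+1}(X) = 0$ (so extending the defining sums up to degree $d+1$ rather than $d$ changes nothing), we get $h' = h$, $m' = m$, $N' = N$. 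Finally, the displayed bound is nondecreasing in $d'$ and $O(\log(d')^3) = O(\log(d)^3)$, so substituting $d+1$ for $d'$ yields exactly the bound in the statement. Everything after the dimension reduction is routine bookkeeping with the homotopy-invariance of $h$, $m$, and $N$.
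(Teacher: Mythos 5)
Your proposal is correct and follows essentially the same route as the paper: replace $X$ by a homotopy equivalent finite simplicial set of dimension at most $d+1$, then apply Theorem~\ref{thm:intro_main} and note that $h$, $m$, $N$ depend only on the (unchanged) integral homology. The only difference is presentational --- the paper simply cites \cite[Proposition 4C.1]{Hatcher} for the $(d+1)$-dimensional CW model where you sketch the cell-trading argument directly, followed in both cases by simplicial approximation.
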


Our algorithm largely depends on part of Brown's algorithm; we follow the presentation given in \cite{alg}. In particular, we construct the $(d+1)^{\text{st}}$ stage of a Postnikov tower of the input space $X$, where $d$ is the dimension of $X$. With the finite homology condition as stated, this gives us a simplicial set which, after some slight modifications, is $p$-locally equivalent to $X$.

% \begin{theorem}[Theorem \ref{thm:main}]\label{thm:intro_main}
% Let $X$ be a finite, simply-connected simplicial set such that $H_n(X;\Z)$ is finite for all $n\geq 2$, and let $p$ be a prime. Then there exists an algorithm giving a simplicial set which is $p$-locally homotopy equivalent to $X$ with size upper bounded by the dimension of $X$ as well as the total rank of the homology of $X$ (TODO: details of upper bound?)
% \end{theorem}

\subsection*{Paper Outline}

In section \ref{sec:prelim}, we establish notation and recall definitions. In particular, we review the definitions of Eilenberg-Maclane spaces, Postnikov systems, and twisted products, and present the standard simplicial set representative for an Eilenberg-Maclane space.

Next, in section \ref{sec:bounding_homotopy} we bound the size of the homotopy groups of a simply-connected, finite-dimensional space $X$ with finite integral homology; the bound is in terms of the size of the homology groups. This is accomplished by combining two results from the literature: one that bounds the rank of the homotopy groups and another that bounds the order of elements of the homotopy groups.

In section \ref{sec:alg} we use the Postnikov system algorithm of Brown to create a simplicial set that will be shown to satisfy the requirements of Theorem \ref{thm:intro_main}. This is accomplished by finding a skeleton of a Postnikov stage of $X$ via Brown's algorithm, then removing simplices so that the Postnikov map induces a homotopy equivalence.

In section \ref{sec:postnikov} we bound the size of the resulting simplicial set by bounding the size of a Postnikov stage of $X$. This is done by analyzing the size of the standard simplicial set model of an Eilenberg-Maclane space and then expressing sizes of Postnikov stages in terms of these spaces.

Finally, in section \ref{sec:future} we consider potential ways to strengthen our main result in the future, such as improving the bound in Theorem \ref{thm:intro_main} and extending it to include spaces with infinite homology groups. We also discuss potential directions that could be considered in pursuit of either of these goals.

% PROPOSITION ABOUT THAT

\subsection*{Conventions}
Let $S$ be any simplicial set. Its set of $k$-simplices is denoted $S_k$, the face operators $S_k\rightarrow X_{S-1}$ are denoted $d_0, \dots, d_k$, and the degeneracy operators $S_k\rightarrow S_{k+1}$ are denoted $s_0, \dots, s_k$. Given any simplicial set $Y$, we denote by $\sk_k(Y)$ the $k$-skeleton of $Y$. We use $C_*(Y)$ to denote the normalized chain complex of $Y$, which is generated by the set of nondegenerate simplices in each degree. The chain differential map $C_n\rightarrow C_{n-1}$ will be denoted by $\partial_n$, or just $\partial$ if $n$ is clear from the context.

\begin{comment}
Since the integral homology is finite, the following holds for each $k$, where the $p_{k,i}$ are distinct primes:
\begin{equation*}
H_k(X;\Z)\cong (\Z/p_{k,1}^{e_{k,1}})\oplus (\Z/p_{k,2}^{e_{k,2}})\oplus \cdots\oplus (\Z/{p_{k,m(k)}}^{e_{k,m(k)}}).
\end{equation*}
\end{comment}

In this paper $X$ will denote a fixed finite, simply-connected simplicial set of dimension $d$ with finite integral homology unless otherwise indicated. We will denote by $d$ the dimension of $X$. Though this overlaps with the notation for face maps, it will always be clear from context which one is meant.

We take all exponent bases and logarithm bases to be $e$ unless otherwise indicated.
\subsection*{Acknowledgements}

We thank Robert Burklund for suggesting this project and mentoring us. We also thank the organizers of Summer Program in Undergraduate Research at MIT; in particular, we would like to thank David Jerison and Ankur Moitra for meeting with us weekly throughout the course of of the project. Finally, we thank Haynes Miller for his advice regarding simplicial groups.

\section{Preliminaries}\label{sec:prelim}

In this section we establish notation and recall definitions. In particular, we review the definitions of Eilenberg-Maclane spaces, Postnikov systems, and twisted products, and present the standard simplicial set representative of an Eilenberg-Maclane space.

\subsection{Eilenberg-Maclane Spaces and Postnikov Systems}

We begin by defining Eilenberg-Maclane spaces and presenting the standard simplicial models for them. We will then use these concepts to define a Postnikov system of a space.

\begin{definition}
Let $\pi$ be an Abelian group and let $n\geq 1$. An \textit{Eilenberg-Maclane Space} $K(\pi, n)$ is a space such that $\pi_n(K(\pi, n))=\pi$ and $\pi_k(K(\pi, n))=0$ for all $k>0, k\neq n$.
\end{definition}

It is a standard fact of algebraic topology that there exists a $K(\pi, n)$ and it is unique up to homotopy equivalence among spaces homotopy equivalent to CW complexes.

We now present the standard simplicial model for $K(\pi, n)$. From here on in this paper, the notation $K(\pi, n)$ will refer to this simplicial set, defined as follows.
\begin{definition}
Let $\Delta^\ell$ denote the simplicial set representing the standard $\ell$-simplex. Then we define $K(\pi,n)_\ell$ as the set of cocycles $Z^n(\Delta^\ell; \pi)$. The face operator $d_i$ is defined in the following way. Each simplex $\sigma \in K(\pi, n)$ is represented by a cocyle in $Z^n(\Delta^l, \pi)$, so let $d_i \sigma$ be the restriction of $\sigma$ to the $i^{\text{th}}$ $(l-1)$-face of $\Delta^l$. The degeneracy operators are defined similarly.
\end{definition}

This definition rise to a similar, yet slightly larger, simplicial set $E(\pi, n)$. This is defined exactly as above, except that $E(\pi, n)_\ell$ is the set of cochains $C^n(\Delta^\ell;\pi)$, rather than the cocycles. Since $E(\pi, n)$ is defined as a set of cochains and $K(\pi, n)$ is defined as a set of cocycles, there is a simplicial map $\delta_*:E(\pi,n)\rightarrow K(\pi, n+1)$ which is induced by the coboundary map $\delta: C^n(\Delta^\ell;\pi)\rightarrow Z^{n+1}(\Delta^\ell;\pi).$
We can now define a Postnikov system of a space.

\begin{definition}
Let $X$ be a simplicial set. A \textit{Postnikov system} of $X$ is a collection of simplicial sets $\{P_k\}_{k=0}^{\infty}$ and maps $\varphi_k:X\rightarrow P_k$ and $p_k:P_k\rightarrow P_{k-1}$ which fit in a commutative diagram as in Figure \ref{fig:postnikov_tower}, such that the $p_k$ are fibrations with fiber $K(\pi_k(X),k)$. We call $P_k$ the $k^{\text{th}}$ \textit{Postnikov stage}, and call $p_k$ the $k^{\text{th}}$ \textit{projection map}.

\begin{figure}[h!]
\begin{center}
\begin{tikzcd}
    & \vdots\arrow[d, "p_3"]\\
    & P_2\arrow[d, "p_2"]\\
    & P_1\arrow[d, "p_1"]\\
    X \arrow[r, "\varphi_0", swap] \arrow[ur, "\varphi_1", swap] \arrow[uur, "\varphi_2"] & P_0
\end{tikzcd}
\end{center}
\caption{Postnikov Tower}
    \label{fig:postnikov_tower}
\end{figure}
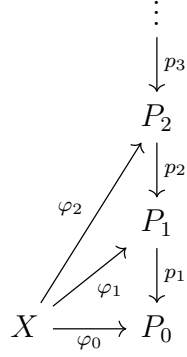

The maps $\varphi_k$ are such that the induced maps $\pi_n(\varphi_k):\pi_n(X)\rightarrow\pi_n(P_k)$ are isomorphisms for $n\leq k$. In addition, for each $k\geq 1$ there is a map $\k_{k-1}:P_{k-1}\rightarrow K(\pi_{k}(X),k+1)$ called the $(k-1)^{\text{st}}$ \textit{Postnikov class}; based on this, $P_{k}$ is obtained via the pullback in Figure \ref{fig:pullback}.

\begin{figure}[h!]
    \centering
    \begin{tikzcd}
        P_k \arrow[r] \arrow[d] & E(\pi_k(X), k)\arrow[d, "\delta_*"]\\
        P_{k-1} \arrow[r, "\textbf{k}_{k-1}", swap] & K(\pi_k(X), k+1)
    \end{tikzcd}
    \caption{Pullback square for $P_k$}
    \label{fig:pullback}
\end{figure}
\end{definition}

% \textcolor{blue}{start of old section}In this subsection we briefly define a Postnikov system of a CW-complexes for the sake of establishing notation. For more details we refer the reader to \cite{Hatcher}. We assume that $X$ is a finite dimensional CW-complex and that $X$ is simply connected.

% A Postnikov system of $X$ is a sequence of spaces $P_0,P_1,\dots$ such that, for each $k$, $P_k$ has $\pi_i(P_k)=0$ for $i>k$ and is equipped with a map $\varphi_k:X\rightarrow P_k$ that induces isomorphisms $\pi_i(Y)\rightarrow \pi_i(P_k)$ for $i\leq k$. There are projection fibrations $p_i: P_i\rightarrow P_{i-1}$ with fibers given by the Eilenberg-Maclane spaces $K(\pi_i(Y), i)$. There are also maps $\k_{i-1}: P_{i-1}\rightarrow K(\pi_i(Y), i)$, called Postnikov classes.

\subsection{Twisted Products}

% In this section we recall Postnikov systems of simplicial sets. For more details we refer to \cite{May}.

% To define a Postnikov system of a simplicial set, we will represent the Eilenberg-Maclane space $K(\pi, n)$ as a simplicial set; henceforth, the notation $K(\pi, n)$ will refer to this simplicial set. Let $\Delta^\ell$ denote the simplicial set representing the standard $\ell$-simplex. Then we define $K(\pi,n)_\ell$ as the set of cocycles $Z^n(\Delta^\ell; \pi)$. The face operator $\delta_i$ is defined in the following way. Each simplex $\sigma \in K(\pi, n)$ is represented by a map in $\Hom(\Delta^l, \pi)$, so let $\delta_i \sigma$ be the restriction of $\sigma$ on the $i$th $(l-1)$-face of $\Hom(\delta^l, \pi)$. The degeneracy operators are defined in a similar way.

% To define a Postnikov system of a simplicial set, we need an appropriate simplicial set analog of fiber product. We will use a construction called the twisted product.
A useful construction in the analysis of Postnikov systems is the twisted Cartesian product. According to \cite{alg}, this construction is intended to mimic a fiber bundle in simplicial sets; they do not give details, but see their paper for slightly more of the intuition.

\begin{definition}
Let $F, B$ be a simplicial sets, and let $F \times B$ denote their Cartesian product. Let $G$ be a simplicial group acting on $F$ on the right. A \textit{twisting operator} $\tau$ is a collection of maps $(\tau_k)_{k=1}^{\infty}$ with $\tau_k:B_k\rightarrow G_{k-1}$ such that the following conditions are satisfied:

\begin{itemize}
    \item $d_0(\tau(\beta))=\tau(d_1 \beta)\tau(d_0 \beta)^{-1}$
    \item $d_i(\tau(\beta))=\tau(d_{i+1} \beta)$ for $i\geq 1$
    \item $s_i(\tau(\beta))=\tau(s_{i+1} \beta)$ for all $i$
    \item $\tau(s_0\beta)=e_k$ for all $\beta\in B_k$, where $e_k$ is the identity of $G_k$.
\end{itemize}
\end{definition}

\begin{definition}
Let $F$, $B$, and $G$ be as above, and let $\tau$ be a twisting operator. Then the \textit{twisted Cartesian product} (or \textit{twisted product}) $F \times_{\tau} B$ is a simplicial set with $(F\times_\tau B)_k=F_k\times B_k$; its face and degeneracy maps coincide with those of the Cartesian product, with the exception of $d_0$, which satisfies

\begin{equation*}
    d_0(f,b)=(d_0(f)\tau(b), d_0(b)).
\end{equation*}
\end{definition}

The map $\delta_*: E(\pi,k)\rightarrow K(\pi, k+1)$ described above is a fibration with fiber $K(\pi, k+1)$. In keeping with the intuition that twisted products represent fiber bundles, therefore, it turns out that $E(\pi,k)$ can be described as a twisted product $K(\pi, k) \times_{\tau} K(\pi, k+1)$. The twisting operator $\tau$ is described as follows. Let $z\in Z^{k+1}(\Delta^l; \pi)$, and let a $k$-face of $\Delta^l$ be denoted by a $(k+1)$-tuple $(i_0,\dots, i_k)$ with $0\leq i_0<\dots <i_k\leq\ell$. Then we define

\begin{equation*}
    (\tau(z))(i_0,\dots, i_k):=z(0, i_0+1, i_1+1, \dots, i_k+1)-z(1,i_0+1, i_1+1, \dots, i_k+1)
\end{equation*}

It is a consequence of this fact that the pullback shown in Figure \ref{fig:pullback} gives rise to an isomorphism as follows, where $\tau^*:=\tau\circ \k_{k-1}$:
\begin{equation*}\label{eqn:twistedIso}
    P_k\cong P_{k-1}\times_{\tau^*}K(\pi_k(X), k+1).
\end{equation*}
 We refer the reader to \cite{May}*{Proposition 18.7} for more details.

\section{Bounding Homotopy Groups}\label{sec:bounding_homotopy}

In this section we bound the size of the homotopy groups $\pi_n(X)$ of a space $X$ in terms of it's homology invariants $h,n,M$. As in the main theorem, we will only consider the case that $X$ has finitely many nonzero homology groups, each of which is of finite order. It is well known by \cite{Serre} that this implies that the homotopy groups are finite as well. In order to bound $|\pi_n(X)|$ based on homology information in this case, we need two separate bounds: a bound on the rank of $\pi_n(X)$, and a bound on the orders of the individual elements.

Let $X_{(p)}$ denote the $p$-localization of $X$. Since $\pi_n(X)$ is finite for all $n$, we have $X\cong\prod_p X_{(p)}$, so $\pi_n(X)\cong\prod_p \pi_n(X_{(p)})$; as such, we will focus on bounding the homotopy groups of $X_{(p)}$ for each $p$.  Note the following bound for the rank of $\pi_n(X_{(p)})$:

\begin{proposition}[\cite{Burkland}]
For $X$ simply-connected and for each prime $p$ we have that
\begin{equation*}
\rank(\pi_n(X_{(p)}))\leq f(n)r^n,
\end{equation*}
where $f(n)\in\exp\left(O\left(\log(n)^3\right)\right) $ and $r\leq h_p$.
\end{proposition}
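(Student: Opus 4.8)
The plan is to reduce the computation of $\rank\pi_n(X_{(p)})$ — the minimal number of generators of the finite abelian $p$-group $\pi_n(X_{(p)})$ — to a homological computation over $\Z_{(p)}$, and then to bound that homology in terms of $h_p$ via a cobar-type resolution. First I would set up the reductions: by Serre's finiteness theorem each $\pi_n(X)$ is finite and $X\simeq\prod_p X_{(p)}$, so it suffices to work at a single prime; and $\bar H_*(X;\Z_{(p)})$, as a graded $\Z_{(p)}$-module, is concentrated in degrees $2$ through $d$ (using simple connectivity and $\dim X=d$) with total minimal number of generators exactly $r:=h_p$ by definition. Thus the target is to bound $\rank\pi_n(X_{(p)})$ by $f(n)r^n$ with $f(n)\in\exp(O(\log(n)^3))$.

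Next I would bound mod-$p$ homotopy by the homology of iterated loop spaces. Writing $\pi_n(X)=\pi_{n-1}(\Omega X)$ and using that $\Omega X$ is a connected $H$-space (so its mod-$p$ homology is a Hopf algebra and the Hurewicz map factors through its primitives), together with a Serre/Eilenberg--Moore comparison to descend in degree, one bounds $\rank\pi_n(X_{(p)})$ by a sum of ranks $\rank_p H_m(\Omega^j X;\Z_{(p)})$ over a controlled range of degrees $m\le n$ and loop orders $j=O(\log n)$; equivalently one may run the unstable Adams spectral sequence for the $p$-completion of $X$ and bound its $E_2$-page.

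Then I would bound the loop space homology. The Eilenberg--Moore spectral sequence for the path--loop fibration converges and, since $X$ is simply connected, is computed by a cobar complex on $\bar H_*(X;\Z_{(p)})$, giving $\rank_p H_n(\Omega X;\Z_{(p)})\le\sum_{s\ge0}\rank_p\bigl(\bar H_*(X;\Z_{(p)})^{\otimes s}\bigr)_n$. As the $s$-th term vanishes unless $2s\le n$ and has rank at most $r^s$, this sum is at most $\tfrac n2\,r^{n/2}$; iterating for $\Omega^j X$ keeps the exponential base at $r$ while introducing only polynomial-in-$n$ factors. Feeding this into the previous step and absorbing all polynomial-in-$n$ factors together with the $O(\log n)$-fold iteration into a single error term produces the stated bound.

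The main obstacle is the middle step. The mod-$p$ Hurewicz map is very far from an isomorphism, so recovering $\pi_n\otimes\F_p$ from the homology of $\Omega^jX$ forces one to control the relevant spectral sequence carefully enough that only a subexponential overhead — rather than a second exponential — is incurred; pinning this down to exactly $\exp(O(\log(n)^3))$ is the technical heart of the argument, and is what the cited result of Burklund supplies. The role of the two outer steps is only to phrase its hypotheses and conclusion in terms of $h_p$ and $n$.
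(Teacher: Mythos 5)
This proposition is not proved in the paper at all: it is quoted verbatim from the cited reference (Burklund), and the paper uses it as a black box. So the relevant question is whether your sketch actually constitutes a proof, and it does not. You have correctly identified the standard scaffolding (Serre finiteness, working one prime at a time, the Eilenberg--Moore/cobar bound $\rank_p H_n(\Omega X;\Z_{(p)})\le\sum_s\rank_p(\bar H_*(X;\Z_{(p)})^{\otimes s})_n\le \tfrac n2 r^{n/2}$ for a single looping), but the middle step is where the entire content of the theorem lives, and your treatment of it is an assertion rather than an argument. In particular, the claim that ``iterating for $\Omega^j X$ keeps the exponential base at $r$ while introducing only polynomial-in-$n$ factors'' is false as stated for the naive iteration: after one looping, the total rank of $\bar H_*(\Omega X;\Z_{(p)})$ through degree $n$ is already of order $n\,r^{n/2}$, i.e.\ exponential in $n$, so feeding that into the cobar bound a second time yields a bound of the shape $(n\,r^{n/2})^{n/2}$, which is doubly exponential. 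Avoiding this blow-up --- and likewise controlling the passage from $\pi_n\otimes\F_p$ back through the Hurewicz map with only $\exp(O(\log(n)^3))$ overhead --- is precisely the nontrivial work of the cited result, as you yourself concede in your final paragraph.

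In short: your outline is a reasonable description of the circle of ideas in which such a bound is proved, but it defers the one genuinely hard quantitative step to the very citation it is meant to replace, and the one place where you do attempt a quantitative iteration (the $\Omega^j$ step) would fail if carried out literally. Since the paper treats this as an external input, the honest move is to do the same; if you want to supply a proof, the missing ingredient is an argument that bounds $\rank(\pi_n)$ by homology data without compounding exponentials at each descent, which is not achieved by repeatedly applying the cobar estimate.
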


We also have the following bound on the torsion exponent of $\pi_n(X_{(p)})$.
\begin{proposition}[\cite{Barratt}, pg. 125]
Let $X$ be simply connected. Then for each prime $p$ we have
\begin{equation*}
    p^{cn}\pi_n(X_{(p)})=0,
\end{equation*}
where $c\leq 2m_p$.

\end{proposition}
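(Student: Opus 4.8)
The plan is to turn the homotopy statement into a homology statement via the Hurewicz theorem and then to control the homology with Serre spectral sequences, inducting on $n$. Since $X\simeq\prod_q X_{(q)}$ and only the prime $p$ matters, I would first replace $X$ by $X_{(p)}$; by the standing hypotheses on $X$ (finite, with finite integral homology) this is simply connected, its reduced integral homology is a finite $p$-group in each degree, and the sum over all $k$ of the $p$-torsion exponents of $H_k(X_{(p)})$ equals $m_p$. Write $\pi_k:=\pi_k(X_{(p)})$ --- a finite $p$-group, since the homotopy groups of $X$ are finite by \cite{Serre} --- and abbreviate by $\pexp(-)$ the $p$-torsion exponent of an abelian group. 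The target is $\pexp(\pi_n)\le 2m_pn$.

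Concretely, I would run the argument along the tower of connected covers $\cdots\to X\langle k\rangle\to X\langle k-1\rangle\to\cdots\to X\langle 1\rangle=X_{(p)}$, in which $X\langle k\rangle$ is $k$-connected and $X\langle k\rangle\to X\langle k-1\rangle$ is a fibration with fiber $K(\pi_k,k-1)$. Hurewicz gives $\pi_n\cong H_n(X\langle n-1\rangle)$, since $X\langle n-1\rangle$ is $(n-1)$-connected. Descending the tower one stage at a time: in the Serre spectral sequence of $K(\pi_k,k-1)\to X\langle k\rangle\to X\langle k-1\rangle$ the fiber is $(k-2)$-connected and the base is $(k-1)$-connected, so in total degree $n$ the $E^2$-page is supported on only a few bidegrees, and $H_n(X\langle k\rangle)$ carries a finite filtration whose associated graded pieces are subquotients of $H_n(X\langle k-1\rangle)$, of $H_n\!\big(K(\pi_k,k-1)\big)$, and of $H_*\!\big(X\langle k-1\rangle;\pi_k\big)$, the last of which has $p$-torsion exponent at most $\pexp(\pi_k)$. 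Since $\pexp$ is subadditive along extensions, iterating from $k=n-1$ down to $k=1$ produces a recursion of the rough shape
\[
\pexp(\pi_n)\ \le\ \pexp H_n\!\big(X_{(p)}\big)\ +\ \sum_{k=2}^{n-1}\Big(\pexp H_n\!\big(K(\pi_k,k-1)\big)\ +\ \pexp(\pi_k)\Big),
\]
with $\pexp H_n(X_{(p)})\le m_p$, into which one feeds the inductive hypothesis $\pexp(\pi_k)\le 2m_pk$ for $k<n$.

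The obstacle is twofold. First, one must bound $\pexp H_n\!\big(K(A,m)\big)$ for a finite abelian $p$-group $A$: reducing to $A$ cyclic by K\"unneth and analyzing $H_*\!\big(K(\Z/p^e,m)\big)$ inductively on $m$, via the path-loop fibration $K(\Z/p^e,m)\to *\to K(\Z/p^e,m+1)$ and the Bockstein spectral sequence, which pins down the rate at which higher torsion appears --- order $p^{e+r}$ does not occur in $H_i$ until $i$ exceeds $m$ by an amount growing with $r$. Second, and more seriously, one must prevent the accumulation of all these contributions --- including the filtration lengths appearing in every spectral sequence used --- from degrading the bound well past linear in $n$: resolving the recursion above naively gives a far weaker estimate, and obtaining the asserted linear bound with the explicit constant $2m_p$ (heuristically, one $m_p$ from the homology of $X$ itself and a second accumulating from the Eilenberg-Maclane fibers as one climbs the tower) requires choosing the order of operations carefully --- working with connected covers and transgressions, where the spectral sequences are sparse, rather than with Postnikov stages, whose homology is far harder to control --- so that the exponent estimates do not compound. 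Rather than reproduce that optimization, I would invoke \cite{Barratt} for the final bound.
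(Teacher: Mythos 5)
The paper does not prove this proposition at all: it is quoted verbatim from \cite{Barratt}, so the ``proof'' on the paper's side is nothing more than the citation. Your sketch lays out the natural strategy for a result of this kind --- pass to $X_{(p)}$, climb the Whitehead tower of connected covers, identify $\pi_n$ with $H_n(X\langle n-1\rangle)$ by Hurewicz, and control each stage via the Serre spectral sequence of $K(\pi_k,k-1)\to X\langle k\rangle\to X\langle k-1\rangle$ --- and you correctly isolate the two genuine difficulties: bounding the torsion exponent of $H_*(K(A,m))$ for a finite abelian $p$-group $A$, and preventing the exponent estimates from compounding as the tower is descended. But you resolve neither. In particular the recursion you write down does not close to the asserted bound: the term $\pexp H_n\bigl(K(\pi_k,k-1)\bigr)$ itself depends on $\pexp(\pi_k)$, so feeding the inductive hypothesis $\pexp(\pi_k)\le 2m_pk$ into the right-hand side and summing over $k$ yields an estimate that is at best quadratic in $n$, and worse once the filtration lengths of each spectral sequence are charged to the exponent. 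Extracting the linear bound with the explicit constant $2m_p$ is exactly the content of Barratt's theorem, and your final step is to invoke that reference.

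So the honest assessment is that your proposal is an annotated citation rather than a proof --- which happens to put you in precisely the same position as the paper, since the paper also simply cites \cite{Barratt}. Nothing you wrote is wrong, and the structural outline is the standard one; the gap is that the key quantitative step (the non-compounding linear estimate) is asserted but not established, and you should present the argument as a heuristic for why the cited bound is plausible, not as a derivation of it.
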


Putting these two bounds together, we get the following:

\begin{proposition}\label{prop:homotopy_group_bound}
Let $X$ be simply connected, and let $p$ be any prime. Then
\begin{equation*}|\pi_n(X_{(p)})|\leq \exp(m_p\log(p)\exp(n\log(h_p)+O\left(\log(n)^3\right)).\end{equation*}
\end{proposition}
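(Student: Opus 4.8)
The plan is to combine the two preceding propositions by way of the elementary fact that the order of a finite abelian $p$-group is controlled by its exponent raised to its rank. Since $X$ has finitely many nonzero homology groups, each of finite order, \cite{Serre} implies that every $\pi_n(X)$ is finite; as $X$ is simply-connected (hence nilpotent), localization gives $\pi_n(X_{(p)})\cong\pi_n(X)\otimes\Z_{(p)}$, which is a finite abelian $p$-group. If $A$ is such a group with minimal number of generators $\rho$ and with $p^e A=0$, then writing $A\cong\bigoplus_{i=1}^{\rho}\Z/p^{a_i}$ with $a_i\le e$ for each $i$ gives $|A|=\prod_{i=1}^{\rho}p^{a_i}\le p^{e\rho}$.

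Next I would apply this with $A=\pi_n(X_{(p)})$. The proposition of \cite{Barratt} gives $p^{e}A=0$ for some $e\le 2m_p n$, and the proposition of \cite{Burkland} gives $\rho\le f(n)r^n$ with $r\le h_p$ and $f(n)\in\exp(O(\log(n)^3))$, hence $\rho\le f(n)h_p^n$. (If $h_p=0$ then $X$ has no $p$-torsion in positive-degree homology, so $\widetilde H_*(X;\F_p)=0$ and the mod-$\mathcal{C}$ Hurewicz theorem forces $\pi_n(X_{(p)})=0$, making the bound immediate; we may therefore assume $h_p\ge 1$.) Combining the two estimates yields
\[
|\pi_n(X_{(p)})|\le p^{\,2m_p n\, f(n)\, h_p^{n}}.
\]

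Finally I would take logarithms with base $e$, obtaining
\[
\log|\pi_n(X_{(p)})|\le 2m_p n f(n) h_p^n\log(p)=m_p\log(p)\,h_p^{n}\cdot\bigl(2nf(n)\bigr).
\]
Since $f(n)\in\exp(O(\log(n)^3))$ and $\log(2n)=O(\log(n)^3)$, we have $2nf(n)=\exp\bigl(\log(2n)+O(\log(n)^3)\bigr)=\exp\bigl(O(\log(n)^3)\bigr)$, so that $\log|\pi_n(X_{(p)})|\le m_p\log(p)\exp\bigl(n\log(h_p)+O(\log(n)^3)\bigr)$, and exponentiating gives the statement. I do not expect a genuine obstacle here: the only points requiring care are the asymptotic bookkeeping — verifying that the extra factor $2nf(n)$ really is absorbed into $\exp(O(\log(n)^3))$ uniformly in $n$ after enlarging the implied constant — and dispatching the degenerate cases $h_p\in\{0,1\}$ (and small $n$), where $\log(h_p)$ vanishes or is undefined, both handled as above.
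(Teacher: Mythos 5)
Your proof is correct and takes essentially the same route as the paper's: both combine the torsion-exponent bound ($c\le 2m_p$) and the rank bound ($r\le h_p$, $f(n)\in\exp(O(\log(n)^3))$) via the elementary estimate $|A|\le(\text{exponent})^{\text{rank}}$, and then absorb the residual factor $2nf(n)$ into the $\exp(O(\log(n)^3))$ term. The attention you give to the degenerate cases $h_p\in\{0,1\}$ is extra care the paper's proof passes over silently.
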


\begin{proof} The above bounds are combined as follows.
\begin{align*}|\pi_n(X_{(p)})|&\leq{(p^{cn})}^{\rank(\pi_n(X_{(p)}))}\\
&\leq p^{cnr^nf(n)}\\
&\leq p^{2m_pnh_p^nf(n)}\\
&\leq \exp\left(\log(p)2m_pnh_p^n\exp \left(O\left(\log(n)^3\right)\right))\right)\\
&\leq \exp\left(m_p\log(p)\exp\left(\log(2nh_p^n)+O\left(\log(n)^3\right)\right)\right)\\
&\leq \exp\left(m_p\log(p)\exp\left(\log(2n)+n\log(h_p)+O\left(\log(n)^3\right)\right)\right)\\
&\leq \exp(m_p\log(p)\exp\left(n\log(h_p)+O\left(\log(n)^3\right)\right).
\end{align*}
\end{proof}

We can now easily combine the above bound at each value of $p$ to get the following overall bound.

\begin{proposition}\label{prop:full_homotopy_bound}
Let $X$ be simply connected, and let $p$ be any prime. Then
\[|\pi_n(X)|\leq \exp\left(m\log(N)\exp(n\log(h)+O\left(\log(n)^3\right)\right)\]
\end{proposition}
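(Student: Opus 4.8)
The plan is to lift the prime-by-prime bound of Proposition~\ref{prop:homotopy_group_bound} to a global bound on $|\pi_n(X)|$ by using the finite-type decomposition $\pi_n(X) \cong \prod_p \pi_n(X_{(p)})$ already established in the section. The multiplicativity of cardinality over this product turns the sum over primes into a product, which I will then bound using the definitions of $h$, $m$, and $N$ from the introduction.

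First I would write $|\pi_n(X)| = \prod_p |\pi_n(X_{(p)})|$, where the product ranges over all primes but is supported on the finitely many $p$ for which $\pi_n(X_{(p)}) \neq 0$; since $X$ has finitely many nonzero finite homology groups, this is a finite product, and moreover every such $p$ divides $N$ (as a prime appearing in the homology must have $h_p \neq 0$). Then I would apply Proposition~\ref{prop:homotopy_group_bound} to each factor, giving
\[
|\pi_n(X)| \leq \prod_{p \mid N} \exp\!\left(m_p \log(p)\exp\!\left(n\log(h_p) + O(\log(n)^3)\right)\right).
\]
The product of exponentials is the exponential of the sum, so the exponent becomes $\sum_{p \mid N} m_p \log(p)\exp(n\log(h_p) + O(\log(n)^3))$.

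Next I would bound this sum termwise using $m_p \leq m$ and $h_p \leq h$, which lets me replace the $p$-dependent pieces $m_p$ and $\exp(n\log(h_p) + O(\log(n)^3))$ by their $p$-independent majorants, pulling the common factor out of the sum and leaving $\sum_{p \mid N} \log(p) = \log\!\left(\prod_{p \mid N} p\right) \leq \log(N)$, where the last inequality uses that $N = \prod_{p: h_p \neq 0} p$ includes every prime in the sum. This yields exactly
\[
|\pi_n(X)| \leq \exp\!\left(m \log(N)\exp\!\left(n\log(h) + O(\log(n)^3)\right)\right),
\]
as claimed. The only mild subtlety to get right is the bookkeeping on the index set — making sure that every prime contributing to $\pi_n(X)$ is indeed a prime dividing $N$, so that $\sum \log(p) \le \log N$ is legitimate — but since a nonzero $\pi_n(X_{(p)})$ forces nonzero $p$-local homology in positive degrees (by the mod-$p$ Hurewicz theorem, $X$ being simply-connected), this follows from the definition of $N$. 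No step here is a real obstacle; the proposition is essentially a packaging of Proposition~\ref{prop:homotopy_group_bound} together with the product decomposition, and the main (trivial) point is tracking that the $O(\log(n)^3)$ absorbed into the exponent behaves uniformly across the finitely many primes.
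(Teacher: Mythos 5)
Your proposal is correct and follows essentially the same route as the paper: decompose $\pi_n(X)\cong\prod_p \pi_n(X_{(p)})$, apply Proposition~\ref{prop:homotopy_group_bound} to each factor, replace $m_p$ and $h_p$ by $m$ and $h$, and convert $\sum_p \log(p)$ into $\log(N)$. Your extra remark justifying that every contributing prime divides $N$ (via the mod-$p$ Hurewicz theorem) is a detail the paper leaves implicit, but it is the same argument.
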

\begin{proof}
As noted above, we know that $\pi_n(X)\cong\prod_p \pi_n(X_{(p)})$. Therefore, to bound $\pi_n(X)$ multiply the bounds given by Proposition \ref{prop:homotopy_group_bound} at each prime $p$ which shows up in the homology of $X$. We first replace each $m_p$ with $m$, because $m_p\leq m$ for all $p$ by definition. Translating this product into a sum in the exponent replaces the $\log(p)$ with $\sum_p \log(p)$, and then combining the logs allows us to replace this with $\log(N)$. Recall also that by definition, $h_p\leq h$ for all $p$, so we can replace each $\log h_p$ with $\log h$, giving us the above expression.
\end{proof}

\section{Simplicial Set Algorithm}\label{sec:alg}
In this section we describe the algorithm used to construct the simplicial set required for the main theorem. This procedure relies heavily on the algorithm of Brown \cite{Brown} for calculating the homotopy groups of an arbitrary simply-connected simplicial set, as presented in \cite{alg}. We will describe this algorithm in minimal detail.%; see either \cite{Brown} or \cite{alg} for a full description and proof of correctness of the algorithm.

\subsection{Postnikov Tower Algorithm}
We begin by summarizing at a high level the algorithm of Brown \cite{Brown} for constructing a Postnikov system for the input space $X$. As always, $X$ is a finite, simply-connected simplicial set; their algorithm does not require the finiteness assumption on the homology of $X$.

The algorithm proceeds inductively, starting with the trivial zeroth stage $P_0$, which is a point, as well as $\varphi_0$ the constant map. Given $P_{k-1}$ and $\varphi_{k-1}$, the authors simultaneously find the group $\pi_k(X)$ and a simplicial set representative of the Postnikov class $\k_{k-1}:P_{k-1}\rightarrow K(\pi_k(X),k+1)$ using an algebraic construction. The $k^{\text{th}}$ Postnikov stage is then constructed as the simplicial set pullback shown in Figure \ref{fig:pullback}.

% \begin{figure}[ht]
%     \centering
%     \begin{tikzcd}
%         P_k \arrow[r] \arrow[d] & E(\pi_k(X), k)\arrow[d, "\delta"]\\
%         P_{k-1} \arrow[r, "\textbf{k}_{k-1}", swap] & K(\pi_k(X), k+1)
%     \end{tikzcd}
%     \caption{}
%     \label{fig:pullback}
% \end{figure}

The construction of $\varphi_k$ is not important to the rest of the algorithm, so we refer the reader to \cite{alg}.

Notice, in particular, that $P_k$ is constructed as a pullback in a way that automatically gives us the following relation:

\[P_k\subset P_{k-1}\times E(\pi_k(X), k)\]
Inductively, this implies that it is true that
\begin{equation}\label{eq:postnikovBound}
    P_k\subset E(\pi_2(X),2)\times\dots\times E(\pi_k(X),k).
\end{equation}
Note that $E(\pi_1(X), 1)$ is omitted because $X$ is simply connected. Equation \eqref{eq:postnikovBound} is the most important takeaway from this algorithm as we move forward.

\subsection{Constructing an Equivalent Simplicial Set}
We now describe how the above algorithm is used in order to find a size-bounded simplicial set which is $p$-locally homotopy equivalent to $X$, as in the main theorem. In the following section we analyze the size of the simplicial set that results.

Our procedure is motivated by the following property of $p$-localization.

\begin{theorem}\cite{Bousfield}
Let $X$ be as in the main theorem, let $Y$ be a finite simplicial set , and let $f:X\rightarrow Y$ be a map which induces isomorphisms on $\Z_{(p)}$-homology in every degree. Then $f$ induces a homotopy equivalence on $p$-localizations.
\end{theorem}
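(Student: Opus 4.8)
The plan is to deduce this from the standard homotopy theory of localization of simply-connected spaces, in which the $p$-local equivalences are exactly the $\Z_{(p)}$-homology isomorphisms. I will take $Y$ to be simply-connected, which is automatic in the application (there $Y$ is cut out of a finite product of spaces $E(\pi_k(X),k)$, each simply-connected, and the hypothesis on $H_1$ already forces $H_1(Y;\Z)$ to be a torsion group with no $p$-torsion, so no obstruction to localizing arises). The first step is to invoke functoriality of $p$-localization to obtain a map $f_{(p)}\colon X_{(p)}\to Y_{(p)}$ sitting in a commutative square with $f$ and the localization maps $\ell_X\colon X\to X_{(p)}$ and $\ell_Y\colon Y\to Y_{(p)}$. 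Since $X_{(p)}$ and $Y_{(p)}$ are simply-connected (localization preserves simple connectivity), by Whitehead's theorem it suffices to show that $f_{(p)}$ induces an isomorphism on integral homology.

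The key observation is that for a simply-connected space $Z$ one has natural isomorphisms $H_n(Z_{(p)};\Z)\cong H_n(Z;\Z_{(p)})$ for all $n$. This follows from two standard facts: first, the localization map $\ell_Z$ is a $\Z_{(p)}$-homology isomorphism, giving $H_n(Z;\Z_{(p)})\cong H_n(Z_{(p)};\Z_{(p)})$; second, the integral homology of $Z_{(p)}$, whose homotopy groups are $\Z_{(p)}$-modules, is itself a $\Z_{(p)}$-module in each positive degree, so the universal coefficient theorem (with no $\mathrm{Tor}$ term, since $\Z_{(p)}$ is flat over $\Z$) gives $H_n(Z_{(p)};\Z_{(p)})\cong H_n(Z_{(p)};\Z)$. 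Applying this to $Z=X$ and $Z=Y$ and chasing the commutative square, the hypothesis that $f_\ast\colon H_\ast(X;\Z_{(p)})\to H_\ast(Y;\Z_{(p)})$ is an isomorphism translates directly into the statement that $(f_{(p)})_\ast\colon H_\ast(X_{(p)};\Z)\to H_\ast(Y_{(p)};\Z)$ is an isomorphism.

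From here the homology Whitehead theorem applies: a map between simply-connected spaces of the homotopy type of CW complexes (which these simplicial sets are after geometric realization) that induces an isomorphism on integral homology is a weak equivalence, hence a homotopy equivalence. This finishes the argument. Alternatively, one can avoid identifying the homology of the localizations altogether by applying the two-out-of-three property of $\Z_{(p)}$-homology isomorphisms directly to the localization square: three of the four maps ($\ell_X$, $\ell_Y$, and $f$) are $\Z_{(p)}$-homology isomorphisms, hence so is $f_{(p)}$, and a $\Z_{(p)}$-homology isomorphism between $p$-local simply-connected spaces is a weak equivalence.

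The main obstacle here is not any single computation but rather assembling the correct foundational inputs: that $p$-localization exists and is functorial on simply-connected (more generally nilpotent) spaces, that the localization map is a $\Z_{(p)}$-homology isomorphism, and that the integral homology of a $p$-local simply-connected space is already $p$-local. Each of these is classical (Sullivan; Bousfield--Kan), so in the interest of brevity I would cite \cite{Bousfield} for the package as a whole rather than reprove it, flagging only the minor point above that the simple connectivity of $Y$ — needed for localization to behave well — holds in the situations where we apply the theorem.
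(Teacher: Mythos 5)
The paper does not actually prove this statement; it is quoted as a black box with a citation to Bousfield, so there is no internal proof to compare against. Your sketch is the standard argument and is essentially correct: localize $f$, use that the localization maps and $f$ are $\Z_{(p)}$-homology isomorphisms (two-out-of-three), note that the integral homology of a simply-connected $p$-local space is already $p$-local, and conclude by the homology Whitehead theorem. Two remarks. First, you are right to flag simple connectivity of $Y$ as the real hypothesis hiding in ``let $X$ be as in the main theorem''; but your justification (``$Y$ is cut out of a product of simply-connected spaces'') is not by itself a valid argument, since a sub-simplicial-set of a simply-connected simplicial set need not be simply connected. The correct reason in the application is that $Y$ agrees with the Postnikov stage $P_{d+1}$ through dimension $d+1\geq 2$, so it has the same $2$-skeleton and hence the same (trivial) fundamental group. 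Second, if one reads ``$p$-localization'' as Bousfield's $H\Z_{(p)}$-localization — which is presumably why the paper cites Bousfield rather than Sullivan — the statement is close to tautological for arbitrary spaces, since $H\Z_{(p)}$-local equivalences are by definition inverted by that localization; your argument instead establishes the classical simply-connected version, which is what the application needs. Either route is fine.
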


% This is because such a map induces a map $|f|:|X|\rightarrow |Y|$ on the geometric realizations, which is a CW complex map. It then follows by a variant of Whitehead's theorem [TODO: Hatcher Corollary 4.33] that $|f|$ is a homotopy equivalence, so $f$ is a homotopy equivalence as well.

The construction of a map $f$ and a space $Y$ as above uses the Postnikov tower algorithm as follows. Let $d$ be the dimension of $X$. Starting with $X$, we run the algorithm to get $P_{d+1}$, the $(d+1)^{\text{st}}$ stage of the Postnikov tower. The algorithm also gives us a map $\varphi_{d+1}: X\rightarrow P_{d+1}$ which induces isomorphisms in $\pi_n$ for $n\leq d+1$. This gives us the following.

\begin{lemma}\label{lem:homologyIso}
The map $\varphi_{d+1}:X\rightarrow P_{d+1}$ induces an isomorphism in homology through degree $d+1$. 
% \textcolor{blue}{induces an isomorphism in homology in all degrees other than $d+1$?}
\end{lemma}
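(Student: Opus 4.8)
The plan is to use the standard consequence of the Hurewicz and Whitehead theorems: a map that is an isomorphism on homotopy groups in a range of degrees is an isomorphism on homology in the same range. Since $\varphi_{d+1}\colon X\to P_{d+1}$ induces isomorphisms on $\pi_n$ for all $n\leq d+1$, I would first pass to the homotopy fiber $F$ of $\varphi_{d+1}$ (or equivalently the mapping cone), and observe that the long exact sequence of the fibration forces $\pi_n(F)=0$ for $n\leq d$, with the relevant portion in degree $d+1$ controlled as well. Both $X$ and $P_{d+1}$ are simply connected, so $F$ is simply connected, and by the relative Hurewicz theorem the first nonvanishing homotopy group of $F$ agrees with the first nonvanishing reduced homology group; hence $\widetilde H_n(F)=0$ for $n\leq d$. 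Feeding this back through the homology long exact sequence (or Serre spectral sequence) of the fibration $F\to X\to P_{d+1}$ then gives that $\varphi_{d+1}$ induces an isomorphism on $H_n$ for $n\leq d$ and a surjection on $H_{d+1}$.

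To upgrade the degree-$(d+1)$ statement from a surjection to an isomorphism, I would use the hypothesis that $X$ has dimension $d$: then $H_{d+1}(X)=0$, so it suffices to show $H_{d+1}(P_{d+1})=0$ as well. This follows because $\varphi_{d+1}$ is surjective on $H_{d+1}$ with source $0$. Alternatively, and perhaps more cleanly, since $X$ is $d$-dimensional the map $\varphi_{d+1}$ factors up to homotopy through a CW approximation in which everything above degree $d$ can be arranged compatibly; but the quickest route is simply: iso on $\pi_{\leq d+1}$ plus simple-connectivity gives iso on $\widetilde H_{\leq d}$ and epi on $H_{d+1}$ by the argument above, and $H_{d+1}(X)=0$ because $\dim X = d$ forces $C_{d+1}(X)=0$ in the normalized chain complex, so the epimorphism $0=H_{d+1}(X)\twoheadrightarrow H_{d+1}(P_{d+1})$ yields $H_{d+1}(P_{d+1})=0$ too; thus both sides vanish in degree $d+1$ and the induced map is trivially an isomorphism there.

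The main obstacle is purely bookkeeping at the top degree: one must be careful that "isomorphism on homology through degree $d+1$" is really being claimed, and that the relative Hurewicz argument is applied with the correct connectivity so that the $H_{d+1}$ comparison is genuinely an isomorphism and not merely an epimorphism coming from a potential $\pi_{d+2}$ contribution. Since $\varphi_{d+1}$ is assumed only to be an iso on $\pi_n$ for $n\leq d+1$ (and $\pi_{d+2}(P_{d+1})$ may differ from $\pi_{d+2}(X)$), the honest input is: the homotopy fiber $F$ is $d$-connected, hence $d$-connected in homology, hence $\varphi_{d+1}$ is an iso on $H_n$ for $n\leq d$ and an epi on $H_{d+1}$; the dimension hypothesis on $X$ then closes the gap in degree $d+1$. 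I would write the proof in exactly that order: (1) homotopy fiber is $d$-connected from the long exact sequence; (2) relative Hurewicz gives it is $d$-connected in homology; (3) homology long exact sequence gives iso through degree $d$ and epi in degree $d+1$; (4) $\dim X = d$ kills $H_{d+1}(X)$ and hence $H_{d+1}(P_{d+1})$, completing the claim.
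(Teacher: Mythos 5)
Your proof is correct and follows essentially the same route as the paper's: both deduce from the $\pi_{\le d+1}$-isomorphism that the pair $(P_{d+1},X)$ (equivalently, the homotopy fiber of $\varphi_{d+1}$) is highly connected and then apply the relative Hurewicz theorem to transfer this to homology. You are in fact more careful than the paper at the top degree --- the Hurewicz/long-exact-sequence argument alone yields only an epimorphism on $H_{d+1}$, and your use of $\dim X = d$ to conclude $H_{d+1}(X)=H_{d+1}(P_{d+1})=0$ supplies exactly the step the paper's proof glosses over.
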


\begin{proof}\label{lem:finiteness}
Since $\varphi_{d+1}$ induces homotopy isomorphisms in degrees $\leq {d+1}$, it follows by the homotopy long exact sequence that after replacing $P_{d+1}$ with the mapping cylinder of $\varphi_{d+1}$, $\pi_n(P_{d+1},X)=0$ for $n\leq {d+1}$. Therefore, the relative Hurewicz theorem tells us that $H_n(P_{d+1},X)=0$ for $n\leq {d+1}$, which implies that $\varphi_{n+1}$ induces integer homology isomorphisms in degrees $\leq {d+1}$ as well.
\end{proof}

The construction of $P_{d+1}$ also gives us a crucial finiteness condition.

\begin{lemma}
The simplicial set $P_{d+1}$ is levelwise finite.
\end{lemma}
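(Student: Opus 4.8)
The plan is to reduce the statement to the single containment \eqref{eq:postnikovBound}, which with $k=d+1$ reads $P_{d+1}\subset E(\pi_2(X),2)\times\cdots\times E(\pi_{d+1}(X),d+1)$. Taking $\ell$-simplices, $(P_{d+1})_\ell$ is a subset of the product $\prod_{n=2}^{d+1}E(\pi_n(X),n)_\ell$, so it is enough to show that each factor $E(\pi_n(X),n)$ is levelwise finite; a subset of a finite set is finite, and a finite product of finite sets is finite.

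First I would recall the explicit description $E(\pi,n)_\ell=C^n(\Delta^\ell;\pi)$. The standard $\ell$-simplex has only finitely many $n$-simplices, so an $n$-cochain with values in $\pi$ is determined by finitely many elements of $\pi$; concretely $C^n(\Delta^\ell;\pi)$ is in bijection with a finite power of $\pi$. Hence $E(\pi,n)$ is levelwise finite whenever the coefficient group $\pi$ is finite.

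Next I would supply the finiteness of the coefficient groups that appear here, namely $\pi_2(X),\dots,\pi_{d+1}(X)$. Since $X$ is simply connected with only finitely many nonzero homology groups, each of finite order, the theorem of Serre recalled at the start of Section \ref{sec:bounding_homotopy} (or, quantitatively, the explicit bound of Proposition \ref{prop:full_homotopy_bound}) shows that every $\pi_n(X)$ is a finite group. Combining this with the previous paragraph, each $E(\pi_n(X),n)$ is levelwise finite, and therefore so is $P_{d+1}$. (Alternatively, one could argue by induction directly from the pullback square of Figure \ref{fig:pullback}: $P_0$ is a point, and if $P_{k-1}$ is levelwise finite then $(P_k)_\ell$ embeds in the finite set $(P_{k-1})_\ell\times E(\pi_k(X),k)_\ell$.)

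I do not expect a genuine obstacle: the combinatorics of $E(\pi,n)$ and the preservation of levelwise finiteness under subsets and finite products are routine. The only point that needs care, and the only place the hypotheses on $X$ are used, is verifying that all of $\pi_2(X),\dots,\pi_{d+1}(X)$ are finite, which is exactly the content of the finite-integral-homology assumption together with Serre's theorem.
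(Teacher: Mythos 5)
Your proposal is correct and follows essentially the same route as the paper: both reduce to the containment $P_{d+1}\subset E(\pi_2(X),2)\times\cdots\times E(\pi_{d+1}(X),d+1)$ and observe that each $E(\pi_n(X),n)_\ell=C^n(\Delta^\ell;\pi_n(X))$ is finite since $\Delta^\ell$ has finitely many $n$-faces and the coefficient group is finite. You are in fact slightly more careful than the paper, which asserts the finiteness of the $\pi_k(X)$ ``by assumption'' where you correctly trace it to Serre's theorem applied to the finite-integral-homology hypothesis.
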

\begin{proof}
Recall from \eqref{eq:postnikovBound} that
\[P_{d+1}\subset E(\pi_2(X),2)\times\dots\times E(\pi_{d+1}(X),d+1).\]
Since the $n$-simplices of $E(\pi_k(X),k)$ are the $k$-cochains $C_k(\Delta^n, \pi_k(X))$ and $\pi_k(X)$ is a finite group by assumption, it follows that there are finitely many $n$-simplices in $E(\pi_k(X),k)$ for each $n$ and $k$. Therefore, $P_{d+1}$ is a levelwise-finite simplicial set.
\end{proof}

Based on these results, we can get a finite simplicial set which satisfies almost all of our requirements by considering $\sk_{d+2}(P_{d+1})$, the $(d+2)$-skeleton of $P_{d+1}$. It is clear that this preserves the homology through degree $d+1$, so Lemma \ref{lem:homologyIso} still holds. We will continue to use the notation $\varphi_{d+1}$ for the map $X\rightarrow \sk_{d+2}(P_{d+1})$, because the map itself has not changed. Note, in particular, that $\varphi_{d+1}$ induces homology isomorphisms in every degree other than $d+2$.

% Notice that from the previous proposition that $H_n(X)=H_n((P_{d+1})_{\leq d+1})$ for $n\leq d$ and that $H_n(X)=0=H_n((P_d)_{\leq d+1})$ for $n>d+1$, so it only remains to deal with that $H_{d+1}((P_{d+1})_{\leq d+1})$ isn't $0=H_{d+1}(X)$.

%With this in mind, we create a finite simplicial set $P_d'$ by simply removing all simplices of dimension $d+3$ or greater from $P_d$. It is clear that $\phi_d$ induces a map $\phi_d':X\rightarrow P_d'$, and this map is still an isomorphism in homology in degrees 0 through $d+1$. In addition, $P_d'$ has zero homology in degrees $d+3$ and higher.\textcolor{blue}{can indices be shifted down?} %PRESTON: I commented bc >=d_3-simplicies have already been deleted and I think next paragraph deals with summary of next procedure well.

What remains, therefore, is to remove some of the $(d+2)$-simplices of $sk_{d+2}(P_{d+1})$ to create a sub-simplicial-set $Y$ with $H_{d+1}(Y;\Z_{(p)})=H_{d+2}(Y;\Z_{(p)})=0.$ The map $X\rightarrow Y$ induced by $\varphi_{d+1}$ will therefore induce $\Z_{(p)}$-homology isomorphisms in every degree, so this is sufficient. To do this, we wish to remove enough $(d+2)$-simplices from $sk_{d+2}(P_{d+1})$ such that there are no more $(d+2)$-cycles, but so that all $(d+1)$-boundaries remain boundaries. This is done in the following lemma.

\begin{lemma}\label{lem:simplicialSubset}
%Let $X$, $d$, and $P_{d+1}'$ be as above. Then we can find a subset $\{t_1,\dots,t_\ell\}$ of the $(d+2)$-simplices of $P_{d+1}'$ such that, when the $t_i$ are regarded as elements of $C_{d+2}(P_{d+1}';\Z_{(p)})$, the set $\{\partial t_1,\dots,\partial t_\ell\}$ is a basis of $\im\partial_{d+2}\subset C_{d+1}(P_{d+1}';\Z_{(p)})$.%PRESTON: Simplifying notation
We can find a subset $\{t_1,\dots,t_\ell\}$ of the $(d+2)$-simplices of $sk_{d+2}(P_{d+1})$ such that, when the $t_i$ are regarded as elements of $C_{d+2}(sk_{d+2}(P_{d+1});\Z_{(p)})$, the set $\{\partial t_1,\dots,\partial t_\ell\}$ is a basis of $\im\partial_{d+2}\subset C_{d+1}(sk_{d+2}(P_{d+1});\Z_{(p)})$.
\end{lemma}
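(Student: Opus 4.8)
The plan is to forget the topology and read the statement as a fact of commutative algebra over the discrete valuation ring $\Z_{(p)}$, then invoke Nakayama's lemma. Write $R=\Z_{(p)}$ and $M=\im\partial_{d+2}\subseteq C_{d+1}(sk_{d+2}(P_{d+1});R)$. Since $sk_{d+2}(P_{d+1})$ is a finite skeleton of a levelwise-finite simplicial set (as just established), it is a finite simplicial set, so every normalized chain group in sight is a finitely generated free $R$-module. As $R$ is a PID, the submodule $M$ of the free module $C_{d+1}$ is itself free, of some finite rank $r$. Let $\sigma_1,\dots,\sigma_n$ be the nondegenerate $(d+2)$-simplices; they are the standard $R$-basis of $C_{d+2}$, so the elements $\partial\sigma_1,\dots,\partial\sigma_n$ generate $M$. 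What must be done is to extract from this generating set a subset of size $r$ that is an $R$-basis of $M$.

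First I would reduce modulo the maximal ideal $pR$. Tensoring with the residue field $\F_p=R/pR$ turns $M\cong R^r$ into the $\F_p$-vector space $M/pM\cong\F_p^r$, and the reductions $\overline{\partial\sigma_1},\dots,\overline{\partial\sigma_n}$ span it. Because a spanning set of a finite-dimensional vector space contains a basis, I can choose indices $i_1<\dots<i_r$ so that $\overline{\partial\sigma_{i_1}},\dots,\overline{\partial\sigma_{i_r}}$ is an $\F_p$-basis of $M/pM$. By Nakayama's lemma, applicable since $M$ is finitely generated over the local ring $R$, the elements $\partial\sigma_{i_1},\dots,\partial\sigma_{i_r}$ then generate $M$ over $R$. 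Finally, since $M$ is free of rank $r$ and a surjective endomorphism of a finitely generated module over a commutative ring is an isomorphism, these $r$ generators are automatically $R$-linearly independent, hence form an $R$-basis of $M$. Setting $\ell=r$ and $t_j=\sigma_{i_j}$ completes the argument.

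The step I expect to carry the genuine content, as opposed to routine bookkeeping, is the passage through the residue field. Over a general PID the conclusion can fail outright: over $\Z$, the three generators $(2,0),(3,0),(0,1)$ of $\Z^2$ admit no two-element subset that is a basis, and in general a merely maximal $R$-linearly independent subset of $\{\partial\sigma_i\}$ need not span $M$. It is precisely the \emph{local} structure of $\Z_{(p)}$, exploited through Nakayama, that forces some subset of the standard simplex basis to work, and this is one of the places where working $p$-locally is essential rather than cosmetic. Everything else, namely freeness of $M$ over the PID, the Hopfian property used in the last step, and the finite generation of all modules involved, which rests on the levelwise finiteness of $P_{d+1}$, is standard.
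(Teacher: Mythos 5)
Your argument is correct. It reaches the same conclusion as the paper but through different machinery: the paper fixes a basis of $Z_{d+1}(\sk_{d+2}(P_{d+1});\Z_{(p)})$ (onto which $\partial_{d+2}$ surjects since $H_{d+1}$ vanishes), writes $\partial_{d+2}$ as a matrix, and runs an explicit induction on the number of rows, at each step locating an entry not divisible by $p$, observing it is a unit in $\Z_{(p)}$, using that column as a pivot, and passing to the quotient. Your route packages this pivoting into standard commutative algebra: freeness of $\im\partial_{d+2}$ over the PID $\Z_{(p)}$, extraction of a basis of $M/pM$ over $\F_p$, Nakayama to lift spanning, and the Hopfian property to get independence. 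Both proofs turn on exactly the same fact --- in the local ring $\Z_{(p)}$, anything nonzero modulo $p$ is invertible --- and your $(2,0),(3,0),(0,1)$ example correctly isolates why the argument fails over $\Z$ and why localization is essential here. What the paper's version buys is that it reads directly as an algorithm (column selection by Gaussian-elimination-style pivoting), which matters since the lemma feeds into an explicitly algorithmic theorem; your version is cleaner and arguably more rigorous in the inductive bookkeeping, and it is still effective, since extracting a basis of $M/pM$ from a spanning set is itself Gaussian elimination over $\F_p$. Either proof suffices for the paper's purposes.
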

\begin{proof}
Note that since $H_{d+1}(sk_{d+2}(P_{d+1});\Z_{(p)})=0$, it follows that the map $\partial_{d+2}$ surjects onto $Z_{d+1}(sk_{d+2}(P_{d+1});\Z_{(p)})$. Let $\textbf{B}_Z$ be a basis of of $Z_{d+1}(sk_{d+2}(P_{d+1});\Z_{(p)})$. We construct the $\Z_{(p)}$-matrix representation $M$ of the chain differential $\partial_{d+2}$, using the standard basis of $C_{d+2}$ and the basis $\textbf{B}_Z$ of the target space. It is sufficient to find a subset of the columns of $M$ that form a basis of the $\Z_{(p)}$-span of the columns.

We will find this subset by induction on the size of $\textbf{B}_Z$, which is the number of rows of $M$. For the base case of size $1$, note that in a $1\times n$ matrix representing a surjective map, at least one of the entries must not be divisible by $p$. Then, since we have $\Z_{(p)}$-coefficients, that entry is invertible, so it generates the span of all of the entries of the matrix. Thus, our desired simplex $t_1$ is the simplex corresponding to the column of this entry.

Now assume that $M$ has $\ell$ rows. Since it represents a surjective map, at least one of the columns is not divisible by $p$. It follows that, using $\Z_{(p)}$ coefficients, we can scale such a column to be not divisible by any prime.

% Consider $M'$, the matrix obtained by applying the Gram-Schmidt procedure with $\Z_{(p)}$-coefficients to $M$. Since $M$ represents a surjective map, at least one of the pivots in this matrix must equal 1. This is because any pivots that do not equal 1 must be divisible by $p$, and if all of the pivots are divisible by $p$ the map represented cannot be surjective.

Let $t_1$ be a $(d+2)$-simplex corresponding to a column which is not divisible by $p$. It is clear that, since $t_1$'s column can be scaled to be indivisible by any prime, $\im(\partial)/\partial t_1$ has dimension $\ell-1$. In addition, the induced map $\partial^1:C_{d+2}(sk_{d+2}(P_{d+1});\Z_{(p)})/t_1\rightarrow Z_{d+1}(sk_{d+2}(P_{d+1});\Z_{(p)})/\partial t_1$ is clearly still surjective. Thus, we can find $t_2, \dots, t_\ell$ using the inductive hypothesis, and we are done.
\end{proof}

We can now put together the above results to prove the following lemma, which provides the simplicial set required by the main theorem; the remainder of the proof will consist of analyzing the size of the Postnikov stage, which takes place in the next section.

\begin{lemma}\label{lem:finite_simplicial_set}
Let $X$ be a finite, simply-connected simplicial set with finite integral homology, and let $p$ be a prime. Then there exists an algorithm giving a finite simplicial set $Y\subset sk_{d+2}(P_{d+1})$ where $Y$ is $p$-locally homotopy equivalent to $X$.
% \textcolor{blue}{this is the thm without bounding?}
% There exists a subsimplicial set $V$ of dimension $d+2$ whose non-degenerate $n$-simplicies are some those of $(P_d)_{\leq d+1}$ for $n\leq d+2$ and a homotopy $\phi_d:X\rightarrow V$
\end{lemma}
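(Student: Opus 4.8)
The plan is to chain together the pieces already in place: Brown's algorithm produces $P_{d+1}$, the levelwise-finiteness lemma lets us truncate it, Lemma~\ref{lem:homologyIso} controls homology below degree $d+2$, Lemma~\ref{lem:simplicialSubset} fixes degrees $d+1$ and $d+2$, and Bousfield's theorem converts the resulting $\Z_{(p)}$-homology equivalence into a $p$-local homotopy equivalence. The only real work is bookkeeping the homology of the subcomplex we build.

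First I would run Brown's Postnikov algorithm to obtain $P_{d+1}$ and $\varphi_{d+1}\colon X\to P_{d+1}$, and set $S:=\sk_{d+2}(P_{d+1})$, which is finite by the levelwise-finiteness lemma. Since $\dim X=d$, the map $\varphi_{d+1}$ factors through $\sk_{d+1}(P_{d+1})\subseteq S$. By Lemma~\ref{lem:homologyIso} (noting that $\sk_{d+2}$ does not alter homology below degree $d+2$) and flatness of $\Z_{(p)}$ over $\Z$, $\varphi_{d+1}\colon X\to S$ induces $\Z_{(p)}$-homology isomorphisms in all degrees $\le d+1$; moreover $H_n(X;\Z_{(p)})=0$ for $n>d$ since $\dim X=d$, and $H_n(S;\Z_{(p)})=0$ for $n\ge d+3$ since $\dim S\le d+2$. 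In particular $H_{d+1}(S;\Z_{(p)})=0$, so $\partial_{d+2}\colon C_{d+2}(S;\Z_{(p)})\to C_{d+1}(S;\Z_{(p)})$ has image exactly $Z_{d+1}(S;\Z_{(p)})$, and the sole remaining obstruction to $\varphi_{d+1}$ being a $\Z_{(p)}$-homology isomorphism is the group $H_{d+2}(S;\Z_{(p)})=Z_{d+2}(S;\Z_{(p)})$.

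Next I would apply Lemma~\ref{lem:simplicialSubset} to obtain non-degenerate $(d+2)$-simplices $t_1,\dots,t_\ell$ with $\{\partial t_1,\dots,\partial t_\ell\}$ a basis of $\im\partial_{d+2}$, and define $Y\subseteq S$ to be the sub-simplicial set obtained by discarding every non-degenerate $(d+2)$-simplex except $t_1,\dots,t_\ell$; equivalently, $Y=\sk_{d+1}(P_{d+1})$ with the $t_i$ and their degeneracies adjoined. This is a sub-simplicial set because the faces of each $t_i$ are $(d+1)$-simplices, hence already present, while its degeneracies have dimension $\ge d+3$; and $\varphi_{d+1}$ still maps into $Y$ since it already maps into $\sk_{d+1}(P_{d+1})$. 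Writing $T:=\langle t_1,\dots,t_\ell\rangle$, we have $C_{d+2}(Y;\Z_{(p)})=T$ and $C_n(Y;\Z_{(p)})=C_n(S;\Z_{(p)})$ for $n\le d+1$, so $H_n(Y;\Z_{(p)})=H_n(S;\Z_{(p)})$ for $n\le d$; also $H_{d+2}(Y;\Z_{(p)})=\ker(\partial_{d+2}|_T)=0$ because $\partial_{d+2}$ sends the basis $\{t_i\}$ of $T$ to the linearly independent set $\{\partial t_i\}$; and $H_{d+1}(Y;\Z_{(p)})=Z_{d+1}(S;\Z_{(p)})/\partial_{d+2}(T)=0$ because $\partial_{d+2}(T)=\im\partial_{d+2}=Z_{d+1}(S;\Z_{(p)})$. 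Combined with $H_n=0$ for $n\ge d+3$ on both sides, $\varphi_{d+1}\colon X\to Y$ is a $\Z_{(p)}$-homology isomorphism in every degree.

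Finally, $Y$ is finite as a subobject of the finite $S$, and $\varphi_{d+1}$ is a $\Z_{(p)}$-homology isomorphism, so Bousfield's theorem shows $\varphi_{d+1}$ induces a homotopy equivalence on $p$-localizations; that is, $Y$ is $p$-locally homotopy equivalent to $X$. Each step — running Brown's algorithm, truncating to $\sk_{d+2}$, the $\Z_{(p)}$-linear algebra extracting the $t_i$, and deleting the remaining top simplices — is effective, which furnishes the algorithm. I expect the main obstacle to be pinning down the correct subcomplex: one must delete $(d+2)$-simplices so as to kill all $(d+2)$-cycles while simultaneously keeping every $(d+1)$-cycle a boundary, and these requirements pull against each other. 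Lemma~\ref{lem:simplicialSubset} is precisely what reconciles them, since it exhibits $\langle t_1,\dots,t_\ell\rangle$ as a complement to $Z_{d+2}$ inside $C_{d+2}$ that is spanned by honest simplices and maps isomorphically onto $\im\partial_{d+2}$, so that retaining exactly these $(d+2)$-simplices achieves both goals at once.
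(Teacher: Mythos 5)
Your proposal is correct and follows essentially the same route as the paper: truncate $P_{d+1}$ at the $(d+2)$-skeleton, use Lemma~\ref{lem:simplicialSubset} to select the $(d+2)$-simplices $t_i$ to retain, verify the resulting $\Z_{(p)}$-homology isomorphism degree by degree, and conclude via Bousfield's theorem. If anything, your writeup is slightly more careful than the paper's in tracking degeneracies and the degrees above $d+2$.
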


\begin{proof}
As above, we can use Brown's algorithm \cite{Brown} to construct $sk_{d+2}(P_{d+1})$. There exists a finite subset $T\subset (P_{d+1})_{d+2}$ of non-degenerate simplices such that $\{dt|t\in T\}$ is a basis of $Z_{d+1}(P_{d+1};\Z_{(p)})$. Let $Y$ be a simplicial set such that $Y_n=(P_{d+1})_{n}$ for $n\leq d+1$ and $Y_{d+2}=T$. Then the map $\varphi_{d+1}:X\rightarrow Y$ is easily seen to induce $\Z_{(p)}$-homology isomorphisms in all degrees other than $d+1$ and $d+2$. In addition, since $B_{d+1}(Y;\Z_{(p)})=B_{d+1}(P_{d+1};\Z_{(p)})=Z_{d+1}(P_{d+1};\Z_{(p)})$, we have $H_{d+1}(Y;\Z_{(p)})=0$, so $\varphi_{d+1}$ induces an isomorphism in degree $d+1$. Finally, $T$ is linearly independent, so we have that $Z_{d+2}(Y;\Z_{(p)})=H_{d+2}(Y;\Z_{(p)})=0$, meaning $\varphi_{d+1}$ induces an isomorphism in degree $d+2$ as well, which completes the proof by Lemma \ref{lem:homologyIso}.
\end{proof}

\section{Bounding Postnikov Stage Size}\label{sec:postnikov}

In this section we bound the the cardinality of the simplicial set formed in the algorithm for the main theorem, as described in Section \ref{sec:alg}.

Since the Postnikov stages $P_k$ are described in terms of the simplicial set models of the Eilenberg-Maclane spaces $K(\pi, k)$, we first seek to understand $|K(\pi, k)_n|$ for a given finite group $\pi$.

\begin{proposition} In the standard simplicial set model of $K(\pi,k)$ we have
\begin{equation*}
    |K(\pi,k)_n|=|\pi|^{\binom{n}{k}}.
\end{equation*}
\end{proposition}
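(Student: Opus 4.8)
The plan is to directly count cocycles. Recall that by definition $K(\pi,k)_n = Z^k(\Delta^n;\pi)$, the group of $\pi$-valued simplicial $k$-cocycles on the standard $n$-simplex $\Delta^n$. So the statement amounts to the purely combinatorial claim that $|Z^k(\Delta^n;\pi)| = |\pi|^{\binom{n}{k}}$, and since $\pi$ is finite this will follow from computing the $\F_p$- or $\Z$-dimension of the relevant cocycle group and then noting that $Z^k$ is a direct sum of $\binom{n}{k}$ copies of $\pi$.

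First I would set up the cochain complex $C^*(\Delta^n;\pi)$. The nondegenerate $j$-simplices of $\Delta^n$ are indexed by the $(j+1)$-element subsets of $\{0,1,\dots,n\}$, so $C^j(\Delta^n;\pi) \cong \pi^{\binom{n+1}{j+1}}$, with coboundary $\delta\colon C^j \to C^{j+1}$ the usual alternating-sum map. The key input is that $\Delta^n$ is contractible, hence its (reduced) cohomology with coefficients in $\pi$ vanishes in all positive degrees and equals $\pi$ in degree $0$; equivalently the augmented cochain complex is exact, so $\im(\delta\colon C^{j-1}\to C^j) = Z^j(\Delta^n;\pi)$ for all $j\ge 1$. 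From here I would compute $|Z^k|$ by a rank/counting argument: writing $z_j := \dim Z^j$ (interpreting everything as a free module over $\pi$, or working one prime at a time / over a field and invoking finiteness of $\pi$ to assemble), exactness gives $z_j + z_{j+1} = \binom{n+1}{j+1}$ for each $j$, together with $z_0 = 0$ in the reduced complex (or $z_0 = 1$ unreduced, handled by the augmentation). Solving this telescoping recursion yields
\begin{equation*}
z_k = \sum_{i=0}^{k}(-1)^{k-i}\binom{n+1}{i+1} = \binom{n}{k},
\end{equation*}
the last equality being the standard alternating partial-sum identity for binomial coefficients (provable by induction on $k$ using Pascal's rule, or by recognizing $\binom{n}{k}$ as the number of $k$-simplices of $\Delta^n$ not containing the vertex $0$, which spans a splitting of $Z^k$). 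Since $Z^k(\Delta^n;\pi)$ is thus isomorphic to $\pi^{z_k} = \pi^{\binom{n}{k}}$ as a set, we get $|K(\pi,k)_n| = |\pi|^{\binom{n}{k}}$.

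The only genuinely delicate point is justifying that one may pass from dimension counts over a field to the cardinality statement for an arbitrary finite abelian group $\pi$: the cochain functor $C^*(\Delta^n;-)$ commutes with finite direct sums, so it suffices to treat $\pi = \Z/q$, and there the universal coefficient theorem together with the fact that the integral cochain complex of $\Delta^n$ is a complex of finite free $\Z$-modules with free cohomology (all cohomology concentrated in degree $0$, equal to $\Z$) shows $Z^k(\Delta^n;\Z/q)$ is free of rank $\binom{n}{k}$ over $\Z/q$. Alternatively, and more cleanly, one avoids homological algebra entirely by exhibiting an explicit bijection: a $k$-cocycle on $\Delta^n$ is determined freely by its values on the $k$-faces containing the vertex $0$ — there are $\binom{n}{k}$ such faces — because the cocycle condition on each $(k+1)$-face through $0$ uniquely expresses the value on the unique $k$-face of it missing $0$ in terms of the others. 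Checking that this assignment is well-defined and bijective (i.e.\ that the resulting cochain is genuinely a cocycle, using that the faces through $0$ generate the relevant relations) is a short computation, and this is the step I would expect to occupy most of the writeup. I would lead with this explicit-bijection argument and mention the cohomological computation as a remark.
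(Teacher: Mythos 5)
Your lead argument --- that a $k$-cocycle on $\Delta^n$ is freely determined by its values on the $\binom{n}{k}$ $k$-faces containing the vertex $0$, with the cocycle condition on the $(k+1)$-faces through $0$ dictating the values on the remaining faces --- is exactly the paper's proof, and the well-definedness check you defer to "a short computation" is precisely what occupies the bulk of the paper's writeup. (The cohomological alternative you sketch as a remark is also sound in outline, though your closed-form sum has an off-by-one: in the unreduced complex $z_0=1$, and the recursion $z_{j}+z_{j+1}=\binom{n+1}{j+1}$ solves to $z_k=\sum_{j=0}^{k}(-1)^{k-j}\binom{n+1}{j}=\binom{n}{k}$.)
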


\begin{proof}

Recall that elements of $K(\pi, k)_n=Z^k(\Delta^n;\pi)$ are labellings of the $k$-faces of $\Delta^n$ with elements of $\pi$ in such a way that satisfies the cocycle condition. Representing $\Delta^n$ as the set $[n]=\{0,1,\dots,n\}$, this is equivalent to labelling all of the $(k+1)$-element subsets of this set with elements of $\pi$. For convenience we will always label subsets with the elements in increasing order: $\{a_0, a_1, \dots, a_k\}$ implicitly has $a_0<a_1<\dots<a_k.$

We claim that assigning labels to each subset of the form $\{0, a_1, \dots, a_k\}$ uniquely determines a valid $k$-cocycle on the $n$-simplex. We will begin by showing uniqueness, then check that it is a valid cocycle. Let our cocycle be called $z$, and consider the face $\{a_0, a_1, \dots, a_k\}$ with $a_0\neq0$. The cocycle condition tells us that the values of $z$ on the boundary of the $(k+1)$-face $\{0, a_0, \dots, a_k\}$ add to zero in the alternating sum of the coboundary map. Rearranging this condition, we get
\begin{equation}\label{eqn:cycleDef}
    z(\{a_0,\dots,a_k\})=\sum_{i=0}^{k}(-1)^i z(\{0, \dots, \hat{a}_i,\dots\}).
\end{equation}

The expression in the summation represents omitting only $a_i$ from $\{0, a_0, \dots, a_k\}.$ The right-hand-side of \eqref{eqn:cycleDef} is clearly a well-defined value, so we have uniqueness.

Now we just have to show that the resulting cochain is indeed a cocycle. This amounts to checking that every $(k+1)$-face not containing vertex zero is labelled with zero under the coboundary map. To this end, consider the face $\{a_0, \dots, a_{k+1}\}$. We must show that
\begin{equation}\label{eqn:consistency}
    \sum_{i=0}^{k+1}(-1)^i z(\{\dots,\hat{a}_i,\dots\})=0.
\end{equation}
We will substitute each term using equation \eqref{eqn:cycleDef}. In the resulting summation, it is clear that all terms are multiples of $z(\{0, \dots, \hat{a}_i,\dots,\hat{a}_j,\dots\})$ for $0\leq i < j\leq k+1$. Fix some $i$ and $j$, and consider the coefficient of this term. It is obtained in two ways: omitting $a_j$ in \eqref{eqn:consistency} and then omitting $a_i$ in \eqref{eqn:cycleDef}, or vice versa. For the first way, we omit the $j^{\text{th}}$ term and then the $i^{\text{th}}$ term, giving a coefficient of $(-1)^j(-1)^i=(-1)^{i+j}$. For the second way, we omit the $i^{\text{th}}$ term and then, since $j>i$, we omit the $(j-1)^{\text{st}}$ term, for a coefficient of $(-1)^i(-1)^{j-1}=(-1)^{i+j-1}$. Therefore, the overall coefficient of this term is
$$(-1)^{i+j}+(-1)^{i+j-1}=0.$$
Since this term was chosen arbitrarily, the overall expression \eqref{eqn:consistency} is zero, as desired.

We now know that, as claimed, labelling each subset of $[n]$ containing $0$ gives a unique $k$-cocycle on the $n$-simplex. Since there are $\binom{n}{k}$ total $k$-simplices of the form $\{0,a_1,\dots,a_k\}$, it follows that
\begin{equation*}
    |K(\pi,k)_n|=|\pi|^{\binom{n}{k}}.
\end{equation*}
\end{proof}

\begin{comment}
For a naive estimate, we note that as stated in the paper, 
$$P_k\subseteq E(\pi_1(Y), 1)\times\cdots\times  E(\pi_k(Y), k).$$
Since $\pi_1$ is trivial, we can ignore the first factor in the above product. Additionally, note that we wish to calculate $P_d$, where $d$ is the dimension of the input simplicial set $Y$, and that we only need this up to dimension $d$, since we wish to modify $P_d$ to make it homotopy equivalent to $Y$.

To start, therefore, consider $E(\pi, k)_n=C^k(\Delta^n;\pi)$, where $\pi$ is a finite group. This is the set of all labellings of the $k$-faces of $\Delta^n$ with elements of $\pi$; these are in bijection with $k+1$-element subsets of $[0,1,\dots,n]$, so there are a total of $\binom{n+1}{k+1}$. Therefore, we have
$$size(E(\Z/m, k)_n)=|\pi|^{\binom{n+1}{k+1}}.$$
Here we use $size$ to denote the number of simplices.

Now we wish to use this to find the number of degree-$k$ simplices of $E(\pi_2(Y), 2)\times\cdots\times  E(\pi_d(Y), d)$. It is clear from the definition that the only spaces in this product with nontrivial degree-$k$ simplices are the ones corresponding to $\pi_2$ through $\pi_k$. Using the above formula and adding for all $k$ from $2$ to $d$, we get the following formula for the total number of simplices in degrees $d$ or less for $P_d$:
$$size((P_d)_{\leq d})\leq \sum_{k=2}^{d}\prod_{j=2}^{k}|\pi_j(Y)|^{\binom{k+1}{j+1}}.$$
We can further upper bound this for a slightly cleaner expression:
$$size((P_d)_{\leq d})\leq d\prod_{j=2}^{d}|\pi_j(Y)|^{\binom{d+1}{j+1}}.$$
\end{comment}
Now we are prepared to  bound the size of $\sk_n(P_k)$ using our understanding of the sizes of the simplicial set models of the $K(\pi, k)$s.

\begin{proposition}\label{prop:P_d_homotopy_groups}
Let $P_k$ be the $k^{\text{th}}$ Postnikov stage of a simplicial set $X$. Then
\begin{equation*}|(\sk_n(P_k))|\leq \sum_{\ell=0}^{n}\prod_{j=2}^{k}|\pi_j(X)|^{\binom{\ell}{j}}.\end{equation*}
\end{proposition}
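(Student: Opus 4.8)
The plan is to bound the number of non-degenerate $\ell$-simplices of $P_k$ for each $\ell \le n$ and then sum over $\ell$. The key input is the containment from Equation \eqref{eq:postnikovBound}, namely
\[
P_k \subset E(\pi_2(X),2) \times \cdots \times E(\pi_k(X),k),
\]
which gives $|(P_k)_\ell| \le \prod_{j=2}^{k} |E(\pi_j(X),j)_\ell|$. Since $E(\pi,j)_\ell = C^j(\Delta^\ell;\pi)$ is the set of labellings of the $j$-faces of $\Delta^\ell$ by elements of $\pi$, and $\Delta^\ell$ has $\binom{\ell+1}{j+1}$ faces of dimension $j$, we get $|E(\pi_j(X),j)_\ell| = |\pi_j(X)|^{\binom{\ell+1}{j+1}}$. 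This already yields a bound of the shape claimed, but with $\binom{\ell+1}{j+1}$ in place of $\binom{\ell}{j}$; the remaining work is to sharpen the exponent.

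The improvement comes from counting \emph{non-degenerate} simplices rather than all simplices, together with the pullback structure of $P_k$ from Figure \ref{fig:pullback}. First I would recall that $|(\sk_n(P_k))| = \sum_{\ell=0}^n (\text{number of non-degenerate }\ell\text{-simplices of }P_k)$, so it suffices to bound the non-degenerate $\ell$-simplices by $\prod_{j=2}^k |\pi_j(X)|^{\binom{\ell}{j}}$. I would argue inductively on $k$ using the twisted-product description $P_k \cong P_{k-1} \times_{\tau^*} K(\pi_k(X), k+1)$. A simplex of a twisted product is a pair $(b, f)$ with $b \in (P_{k-1})_\ell$ and $f \in K(\pi_k(X),k+1)_\ell$; from the earlier proposition, $|K(\pi_k(X),k+1)_\ell| = |\pi_k(X)|^{\binom{\ell}{k+1}}$. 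So the naive product bound over the twisted factors would give exponents $\binom{\ell}{j+1}$ summed appropriately — still off by one from the claim. The resolution is that non-degenerate simplices of a simplicial set model of $E(\pi_j,j)$ or $K(\pi_j,j)$ are governed by a smaller count: a non-degenerate $\ell$-simplex of $E(\pi,j)$ corresponds to a cochain not in the image of any degeneracy, and one checks that the effective number of free parameters is $\binom{\ell}{j}$ (the $j$-faces of $\Delta^\ell$ containing a fixed vertex, matching the counting done in the preceding proposition for $K(\pi,k)$). I would make this precise by showing that the restriction-to-the-star-of-$0$ map embeds the non-degenerate $\ell$-simplices of each factor into $\pi^{\binom{\ell}{j}}$, then take products across $j = 2, \dots, k$.

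The main obstacle I anticipate is making the ``non-degenerate simplices inject into $\pi^{\binom{\ell}{j}}$'' step rigorous for the twisted product $P_k$, as opposed to for an untwisted product of $E(\pi,j)$'s. The twisting operator $\tau^*$ alters the $d_0$ face map, which could in principle interact with degeneracies; one must verify that a simplex $(b,f) \in P_{k-1} \times_{\tau^*} K(\pi_k(X),k+1)$ is non-degenerate only if $b$ is non-degenerate in $P_{k-1}$ or $f$ is non-degenerate in $K(\pi_k(X),k+1)$, and then combine the inductive bound on $P_{k-1}$ with the bound $\binom{\ell}{k+1} \le \binom{\ell}{k}$ on the new factor (using the convention $\binom{\ell}{j} = 0$ for $j > \ell$). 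Once the per-factor injection into $\pi^{\binom{\ell}{j}}$ is established, multiplying the counts over $j$ and summing over $\ell \le n$ gives exactly $\sum_{\ell=0}^{n} \prod_{j=2}^{k} |\pi_j(X)|^{\binom{\ell}{j}}$, completing the proof.
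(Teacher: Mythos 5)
Your high-level strategy --- induct on $k$ via the pullback defining $P_k$ from $P_{k-1}$ and count the new factor degreewise --- matches the paper, but the step where you try to produce the exponent $\binom{\ell}{j}$ does not work, and the intended mechanism is different. The resolution is not a non-degeneracy count: the fibration $\delta_*\colon E(\pi_k(X),k)\to K(\pi_k(X),k+1)$ has fiber $K(\pi_k(X),k)$, and $\delta_*$ is levelwise surjective, so the set of $\ell$-simplices of $P_k$ lying over a fixed $\ell$-simplex of $P_{k-1}$ is a copy of $K(\pi_k(X),k)_\ell$, which by the preceding proposition has exactly $|\pi_k(X)|^{\binom{\ell}{k}}$ elements. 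Hence $|(P_k)_\ell|\le|(P_{k-1})_\ell|\cdot|\pi_k(X)|^{\binom{\ell}{k}}$ counting \emph{all} simplices, degenerate or not; induction and summing over $\ell\le n$ finish the proof. (The displayed isomorphism $P_k\cong P_{k-1}\times_{\tau^*}K(\pi_k(X),k+1)$ in the preliminaries has the fiber and base garbled --- the fiber is $K(\pi_k(X),k)$ and the base is $P_{k-1}$ --- which is likely what sent you toward the exponent $\binom{\ell}{k+1}$.)

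Two specific claims in your sketch are false. First, $\binom{\ell}{k+1}\le\binom{\ell}{k}$ fails whenever $\ell>2k+1$ (e.g.\ $\binom{10}{3}>\binom{10}{2}$); since in the application $\ell$ runs up to $d+2$ while $j$ starts at $2$, you cannot trade $\binom{\ell}{j+1}$ for $\binom{\ell}{j}$ this way. Second, the non-degenerate $\ell$-simplices of $E(\pi,j)$ do not inject into $\pi^{\binom{\ell}{j}}$ under restriction to the $j$-faces containing the vertex $0$: already for $E(\Z/2,1)$ in degree $2$, the degenerate cochains on $\Delta^2$ are exactly those of the form $(0,a,a)$ or $(a,a,0)$ (values listed on the edges $01,02,12$), so there are $5$ non-degenerate simplices but only $|\pi|^{\binom{2}{1}}=4$ possible restrictions, and indeed $(1,0,0)$ and $(1,0,1)$ are both non-degenerate yet agree on the two edges containing $0$. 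So that injection cannot be the mechanism, and no analysis of how degeneracies interact with the twisting is needed once one counts the correct fiber.
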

\begin{proof}
We proceed by induction on $k$. Recall the pullback in Figure \ref{fig:pullback} used to construct $P_k$ from $P_{k-1}$.

As mentioned in \cite{Brown}, the fibration $\delta_*: E(\pi_k(X), k)\rightarrow K(\pi_k(X), k+1)$ is a fiber bundle with fiber $K(\pi_k(X), k)$. Therefore, since it is surjective, for each $p\in P_{k-1}$ the subset of $P_k$ corresponding to it in the pullback is isomorphic to $K(\pi_k(X), k)$. It follows that
\begin{equation*}
|(P_k)_\ell|\leq |(P_{k-1})_\ell||(K(\pi_k(X),k)_\ell)|.
\end{equation*}

Since $|K(\pi, k)_\ell|=|\pi|^{\binom{\ell}{k}}$, induction on $k$ gives us
\begin{align*}
|((P_k)_\ell|&\leq \prod_{j=2}^{k}|\pi_j(Y)|^{\binom{\ell}{j}}
% &\leq \prod_{j=2}^{\min(k,n)}|\pi_j(Y)|^{\binom{n}{j}}.
\end{align*}
% The second line follows because $\binom{n}{j}=0$ for $j>n$.
Adding together over all $\ell$ from 0 to $n$ gives us the desired expression.
%\begin{equation*}|(P_d)_{\leq d})|\leq d\prod_{j=2}^{d}|\pi_j(Y)|^{\binom{d}{j}}.\end{equation*}
\end{proof}
\begin{comment}
\begin{proposition}
\begin{equation*}
    |(\sk_{d+2}(P_{d+1})|\leq \prod_p \exp\left(\exp\left(\log\log p + d(\log 2 + \log r) + \log c + \log(g(d))\right)\right)
\end{equation*}
\end{proposition}

\begin{proof}

Substituting this into our expression for the final number of simplicies, we get the following:
\begin{align*}
    |(\sk_{d+2}(P_{d+1})| &\leq \sum_{m=1}^{d+2}\prod_{j=2}^{\min(d+1,m)}\left(\prod_p p^{cjr^jf(j)}\right)^{\binom{m}{j}}\\
    &\leq (d+2)\prod_{j=2}^{d}\left(\prod_p p^{cjr^jf(j)}\right)^{\binom{d+2}{j}}\\
    &\leq 2d\prod_{j=2}^{d}\prod_p p^{2^{d+2}cjr^jf(j)}\\
    &\leq 2d\left(\prod_p p^{2^{d+2}cdr^df(d)}\right)^d\\
    &= 2d\prod_p p^{2^{d+2}cd^2r^df(d)}.
\end{align*}
We can further simplify this by noting that $d^2$ is very small compared to $f(d)$. We will absorb this term and the leading factor of $d$ into a new term $g(d)\in\exp\left(O\left(\log(d)^4\log\log d\right)\right).$ We are then left with the following expression:\textcolor{blue}{again choice of expression}
\begin{align*}
    |(P_d)_{\leq d+2}| &\leq \prod_p p^{2^{d+2}cr^dg(d)}\\
    &=\prod_p \exp\left(\log p 2^{d+2}cr^dg(d)\right)\\
    &=\prod_p \exp\left(\exp\left(\log\log p + (d+2)(\log 2 + \log r) + \log c + \log(g(d))\right)\right).
\end{align*}

\end{proof}

This is totally just a restatement of the previous prop but with O-notation and constant stuff plugged in.
\end{comment}

Now we can bound the size of $\sk_{d+2}(P_{d+1})$, which was used to construct $Y$ in Lemma \ref{lem:finite_simplicial_set}.

\begin{lemma}\label{lem:bound}
\begin{equation*}
    |(\sk_{d+2}(P_{d+1})|\leq \exp\left(mh\log(N)\exp\left(d\log(2h)+O\left(\log(d)^3\right)\right)\right)
\end{equation*}
\end{lemma}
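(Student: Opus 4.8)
The plan is to combine Proposition~\ref{prop:P_d_homotopy_groups} (with $k=d+1$ and $n=d+2$) with the homotopy-group bound of Proposition~\ref{prop:full_homotopy_bound} and then simplify the resulting nested sums and products into the stated closed form. First I would instantiate Proposition~\ref{prop:P_d_homotopy_groups} to write
\[
|\sk_{d+2}(P_{d+1})| \leq \sum_{\ell=0}^{d+2}\prod_{j=2}^{d+1}|\pi_j(X)|^{\binom{\ell}{j}}.
\]
Then I would crudely upper bound this: there are at most $d+3$ terms in the outer sum, each binomial coefficient $\binom{\ell}{j}$ with $\ell \le d+2$ is at most $2^{d+2}$, and each factor $|\pi_j(X)|$ is at most its value at $j=d+1$ by monotonicity of the bound in Proposition~\ref{prop:full_homotopy_bound}. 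This replaces the double product by something like $(d+3)\bigl(|\pi_{d+1}(X)|\bigr)^{d \cdot 2^{d+2}}$, i.e. the whole expression sits inside $\exp\bigl(\text{poly}(d)\cdot 2^{d}\log|\pi_{d+1}(X)|\bigr)$ up to absorbing the leading $d+3$.

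Next I would substitute the bound $|\pi_{d+1}(X)| \le \exp\bigl(m\log(N)\exp((d+1)\log h + O(\log(d)^3))\bigr)$ from Proposition~\ref{prop:full_homotopy_bound}, taking a logarithm so that $\log|\pi_{d+1}(X)| \le m\log(N)\exp((d+1)\log h + O(\log(d)^3))$. Multiplying by the $d\cdot 2^{d+2}$ factor and folding the $2^{d+2} = \exp((d+2)\log 2)$ and the stray polynomial-in-$d$ factors into the exponent, the $(d+1)\log h$ and $(d+2)\log 2$ terms combine into $d\log(2h) + O(\log(d)^3)$ — the lower-order corrections $\log h$, $2\log 2$, and $\log(d+3)$ are all absorbed into the $O(\log(d)^3)$ error term — yielding an overall bound of the form $\exp\bigl(m\log(N)\exp(d\log(2h)+O(\log(d)^3))\bigr)$. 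The extra factor of $h$ in the target bound $\exp(mh\log(N)\cdots)$ is then free, since $h \ge 1$; it is presumably carried along for uniformity with the statement of Theorem~\ref{thm:intro_main}, and I would simply note $m\log(N) \le mh\log(N)$.

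The step I expect to require the most care is the bookkeeping in the exponent: making sure that when $2^{d+2}\binom{\ell}{j}$-type factors multiply the already-doubly-exponential bound on $|\pi_j(X)|$, the $2^{d+2}$ lands inside the \emph{inner} exponential (next to $(d+1)\log h$) rather than outside it, so that it genuinely merges into $d\log(2h)$ rather than blowing the bound up by an entire extra exponential layer. Concretely this means writing $\bigl(|\pi_j(X)|\bigr)^{2^{d+2}} = \exp\bigl(2^{d+2}\log|\pi_j(X)|\bigr)$ and pushing $2^{d+2} = \exp((d+2)\log 2)$ through the product with the inner $\exp((d+1)\log h + \cdots)$, using $\exp(a)\exp(b) = \exp(a+b)$ to get $\exp((d+2)\log 2 + (d+1)\log h + O(\log(d)^3)) = \exp(d\log(2h) + O(\log(d)^3))$. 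Everything else — the outer sum over $\ell$, the product over $j$ from $2$ to $d+1$, and the polynomial-in-$d$ slack — contributes only factors that are swallowed by the $O(\log(d)^3)$ term or by the harmless extra factor of $h$.
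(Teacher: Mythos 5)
Your proposal follows essentially the same route as the paper: instantiate Proposition \ref{prop:P_d_homotopy_groups} at $k=d+1$, $n=d+2$, bound the outer sum by $d+3$ times its largest term, bound each $\binom{\ell}{j}$ by $2^{d+2}=\exp((d+2)\log 2)$, substitute Proposition \ref{prop:full_homotopy_bound}, and push all the polynomial-in-$d$ and $2^{d+2}$ factors into the inner exponential. The one slip is your claim that the leftover $\log h$ (from splitting $(d+1)\log h = d\log h + \log h$) is absorbed into the $O\left(\log(d)^3\right)$ term: that term is a function of $d$ alone, and $h$ is independent of $d$, so this absorption is not valid. The correct fix --- and the reason the stated bound carries the factor $h$ at all, rather than it being ``free'' padding --- is to write $\exp(\cdots+\log h)=h\exp(\cdots)$ and pull the resulting $h$ out in front of $m\log(N)$, exactly as the paper does; with that repair your argument is complete.
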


\begin{proof}
By plugging the formula for a upper-bound for $\pi_j(X)$ from Proposition \ref{prop:full_homotopy_bound} into the formula for a upper bound for $|(\sk_{d+2}(P_{d+1})|$ from Proposition \ref{prop:P_d_homotopy_groups} we get that
\begin{align*}
    |(\sk_{d+2}(P_{d+1})| &\leq \sum_{\ell=0}^{d+2}\prod_{j=2}^{d+1}|\pi_j(X)|^{\binom{\ell}{j}}\\
    &\leq \sum_{\ell=0}^{d+2}\prod_{j=2}^{d+1}\left(\exp(m\log(N)\exp(j\log(h)+O\left(\log(j)^3\right))\right)^{\binom{\ell}{j}}\\
\end{align*}
We upper-bound the sum using $\sum_{\ell=0}^{d+2} a_\ell\leq (d+3)\max(a_\ell)$, noting that $\max(a_\ell)$ is achieved here at $\ell=d+2$.
\begin{align*}
    &\sum_{\ell=0}^{d+2}\prod_{j=2}^{d+1}\left(\exp(m\log(N)\exp(j\log(h)+O\left(\log(j)^3\right))\right)^{\binom{\ell}{j}}\\
    &\leq (d+3)\prod_{j=2}^{d+1}\left(\exp(m\log(N)\exp(j\log(h)+O\left(\log(j)^3\right))\right)^{\binom{d+2}{j}}\\
\end{align*}
We express the product of exponentials as a exponential of a sum.
\begin{align*}
    &(d+3)\prod_{j=2}^{d+1}\left(\exp(m\log(N)\exp(j\log(h)+O\left(\log(j)^3\right))\right)^{\binom{d+2}{j}}\\
    &\leq (d+3)\left(\exp\left(m\log(N)\sum_{j=2}^{d+1}\left(\binom{d+2}{j}\exp\left(j\log(h)+O\left(\log(j)^3\right)\right)\right)\right)\right)\\
\end{align*}
We upper-bound the sum using $\sum_{j=2}^{d+1} a_jb_jc_j\leq d\max(a_j)\max(b_j)\max(c_j)$.
\begin{align*}
    &(d+3)\left(\exp\left(m\log(N)\sum_{j=2}^{d+1}\left(\binom{d+2}{j}\exp\left(j\log(h)+O\left(\log(j)^3\right)\right)\right)\right)\right)\\
    &\leq (d+3)\left(\exp\left(m\log(N)d\left(\exp(\log(2)d)\exp\left((d+1)\log(h)+O\left(\log(d+1)^3\right)\right)\right)\right)\right)\\
\end{align*}
We rewrite $d+3$ as $\exp\log(d+3)$ on the outside, rewrite $d$ as $\exp\log(d)$ on the inside, write $\exp(a)\exp(b)$ as $\exp(a+b)$, and use $O\left(\log(d+1)^3\right)=O\left(\log(d)^3\right)$.
\begin{align*}
    &(d+3)\left(\exp\left(m\log(N)d\left(\exp(\log(2)d)\exp\left((d+1)\log(h)+O\left(\log(d+1)^3\right)\right)\right)\right)\right)\\
    &\leq \exp\left(\log(d+3)+m\log(N)\exp(\log(d))\left(\exp\left(\log(2)d+d\log(h)+\log(h)+O\left(\log(d)^3\right)\right)\right)\right)\\
\end{align*}
We use $a+b\leq ab$ for sufficiently large $a,b$ and write $\exp(a)\exp(b)$ as $\exp(a+b)$.
\begin{align*}
    &\leq\exp\left(\log(d+3)+m\log(N)\exp(\log(d))\left(\exp\left(\log(2)d+d\log(h)+\log(h)+O\left(\log(d)^3\right)\right)\right)\right)\\
    &\leq\exp\left(\log(d+3)m\log(N)\left(\exp\left(\log(d)+(\log(2h))d+\log(h)+O\left(\log(d)^3\right)\right)\right)\right)\\
\end{align*}
We rewrite $\log(d+3)$ as $\exp(\log\log(d+3))$ and use $\exp(a)\exp(b)=\exp(a+b)$.
\begin{align*}
    &\exp\left(\log(d+3)m\log(N)\left(\exp\left(\log(d)+(\log(2h))d+\log(h^2)+O\left(\log(d)^3\right)\right)\right)\right)\\
    &\leq\exp\left(m\log(N)\left(\exp\left(\log\log(d+3)+\log(d)+(\log(2h))d+\log(h^2)+O\left(\log(d)^3\right)\right)\right)\right)\\
\end{align*}
We use $\log\log(d+3)+\log(d)=O(\log(d)^3)$.
\begin{align*}
    &\exp\left(m\log(N)\left(\exp\left(\log\log(d+3)+\log(d)+(\log(2h))d+\log(h)+O\left(\log(d)^3\right)\right)\right)\right)\\
    &\leq\exp\left(m\log(N)\left(\exp\left((\log(2h))d+\log(h)+O\left(\log(d)^3\right)\right)\right)\right)\\
\end{align*}
We write $\exp(\log(h))$ as $h$.
\begin{align*}
    &\exp\left(m\log(N)\left(\exp\left((\log(2h))d+\log(h)+O\left(\log(d)^3\right)\right)\right)\right)\\
    &\leq\exp\left(mh\log(N)\exp\left(\log(2h)d+O\left(\log(d)^3\right)\right)\right).\\
\end{align*}
\end{proof}

Using this, we can easily prove our main theorem.
\begin{theorem}\label{thm:main}
Let $X$ be a finite, simply-connected simplicial set of dimension $d$ with finite integral homology, and let $p$ be a prime. Then there exists an algorithm giving a simplicial set which is $p$-locally homotopy equivalent to $X$ with at most
\begin{equation*}
    \exp\left(mh\log(N)\exp\left(\log(2h)d+O\left(\log(d)^3\right)\right)\right)
\end{equation*}
non-degenerate simplices.
\end{theorem}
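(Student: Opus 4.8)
The plan is simply to assemble the two pieces that Sections~\ref{sec:alg} and~\ref{sec:postnikov} have already established. First I would invoke Lemma~\ref{lem:finite_simplicial_set}, which provides an algorithm that, on input $X$ and a prime $p$, outputs a finite simplicial set $Y$ with $Y \subseteq \sk_{d+2}(P_{d+1})$ and with $Y$ $p$-locally homotopy equivalent to $X$. Granting this, the only thing left to check for Theorem~\ref{thm:main} is the claimed size bound on $Y$.

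Next I would observe that the simplicial-subset inclusion $Y \subseteq \sk_{d+2}(P_{d+1})$ controls the number of nondegenerate simplices. Indeed, if $\sigma \in Y_n$ is nondegenerate as a simplex of $Y$, then it is also nondegenerate as a simplex of $\sk_{d+2}(P_{d+1})$: any relation $\sigma = s_i\rho$ in the ambient complex forces $\rho = d_i\sigma$, which lies in $Y$ since $Y$ is a simplicial subset, so a degeneracy in $\sk_{d+2}(P_{d+1})$ would already be a degeneracy in $Y$. Hence the nondegenerate simplices of $Y$ form a subset of those of $\sk_{d+2}(P_{d+1})$; since all nondegenerate simplices of a skeleton lie in degrees $\leq d+2$, their number is at most the levelwise total $|\sk_{d+2}(P_{d+1})|$ estimated in the previous section.

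Finally I would apply Lemma~\ref{lem:bound}, which bounds $|\sk_{d+2}(P_{d+1})|$ above by $\exp\!\left(mh\log(N)\exp\!\left(\log(2h)d+O(\log(d)^3)\right)\right)$, and conclude that $Y$ has at most this many nondegenerate simplices. Combined with the $p$-local homotopy equivalence $X \simeq Y$ from Lemma~\ref{lem:finite_simplicial_set}, this is exactly the assertion of the theorem.

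I do not expect a genuine obstacle here: all of the substantive work — running Brown's algorithm to produce $\sk_{d+2}(P_{d+1})$, performing the homological surgery of Lemma~\ref{lem:simplicialSubset} to kill the classes in degrees $d+1$ and $d+2$, and carrying out the size estimate of Lemma~\ref{lem:bound} — has already been done. The only points worth a moment's care are bookkeeping ones: confirming that the quantity bounded in Lemma~\ref{lem:bound} really does dominate the count of \emph{nondegenerate} simplices of $Y$ (as argued above it does), and noting that the procedure of Lemma~\ref{lem:finite_simplicial_set} is effective, being assembled from Brown's algorithm together with finite-dimensional linear algebra over $\Z_{(p)}$.
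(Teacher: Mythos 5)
Your proposal is correct and follows essentially the same route as the paper: invoke Lemma~\ref{lem:finite_simplicial_set} to obtain $Y\subset \sk_{d+2}(P_{d+1})$ that is $p$-locally equivalent to $X$, then bound its size by $|\sk_{d+2}(P_{d+1})|$ via Lemma~\ref{lem:bound}. Your extra observation that nondegenerate simplices of the simplicial subset $Y$ remain nondegenerate in the ambient complex is a small bookkeeping point the paper leaves implicit, but it is correct and does not change the argument.
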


\begin{proof}
From Lemma \ref{lem:finite_simplicial_set} we know there exists an algorithm giving a finite simplicial set $Y \subset sk_{d+2}(P_{d+1})$ which is $p$-locally homotopy equivalent to $X$. Since $Y \subset sk_{d+2}(P_{d+1})$, we know that the size of $Y$ is upper-bounded by that of $sk_{d+2}(P_{d+1})$, which Lemma \ref{lem:bound} bounds by the desired expression.
\end{proof}

Note that our main theorem implies the following result about homotopy types in general.

\begin{corollary}\label{cor:main}
Let $X$ be a simply-connected, finite homotopy type with finite integral homology whose highest non-trivial homology is in degree $d$. Then there exists a finite simplicial set $S$ representing a space $p$-locally equivalent to $X$ with at most
\[\exp\left(mh\log(N)\exp\left(\log(2h)(d+1)+O\left(\log(d)^3\right)\right)\right).\]
non-degenerate simplices.
\end{corollary}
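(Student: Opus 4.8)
The plan is to deduce the corollary directly from Theorem~\ref{thm:main}: produce from $X$ a finite simplicial set of dimension at most $d+1$ that is homotopy equivalent to it, and then apply the theorem verbatim to that model.

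First I would invoke the homology decomposition of a simply-connected space. Since $X$ is simply-connected with $\tilde H_i(X;\Z)=0$ for $i>d$ and all homology groups finite (in particular finitely generated), $X$ is homotopy equivalent to a \emph{finite} CW complex of dimension at most $d+1$: it is assembled, one homology group at a time, from finite Moore spaces $M(H_i(X),i)$ for $2\le i\le d$, each of which has cells only in dimensions $i$ and $i+1$, the $(i+1)$-cells being exactly what is needed when $H_i(X)$ carries torsion. A finite CW complex of dimension $n$ is in turn homotopy equivalent to a finite simplicial complex, hence to a finite simplicial set, of dimension $n$ (build it up skeleton by skeleton, replacing each attaching map $S^{k-1}\to(\text{current skeleton})$ by a simplicial approximation on a subdivided sphere and coning it off). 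Write $X'$ for the resulting finite simplicial set; it is simply-connected, has finite integral homology, and has $\dim X'\le d+1$.

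Next I would feed $X'$ and the prime $p$ into Theorem~\ref{thm:main}. This produces a simplicial set $S$ that is $p$-locally homotopy equivalent to $X'$, hence to $X$, with at most $\exp\!\big(m'h'\log(N')\exp(\log(2h')\dim X'+O(\log(\dim X')^{3}))\big)$ non-degenerate simplices, where $h',m',N'$ are the homology invariants of $X'$. Because $h,m,N$ are functions only of the integral homology groups and $X'\simeq X$, we have $h'=h$, $m'=m$, and $N'=N$; and because $\dim X'\le d+1$, with $\log(2h)(d+1)\ge\log(2h)\dim X'$ and $O(\log(d+1)^{3})=O(\log(d)^{3})$, the count is bounded by $\exp\!\big(mh\log(N)\exp(\log(2h)(d+1)+O(\log(d)^{3}))\big)$, which is the claimed bound.

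The only real obstacle is the first step: the bound in Theorem~\ref{thm:main} grows very fast in the simplicial dimension, so an arbitrary finite model of $X$ is useless and one genuinely needs the homology decomposition (together with the routine but slightly fiddly passage from a finite CW complex to a finite simplicial set of the same dimension) to compress $X$ down to dimension $d+1$. Everything after that is bookkeeping with the invariants and the $O(\cdot)$ term. As an alternative that avoids naming an explicit low-dimensional model, one could re-run the construction of Section~\ref{sec:alg} on any finite simplicial model of $X$: since $H_{d+1}(X)=0$ the map $\varphi_{d+1}$ is already a $\Z_{(p)}$-homology isomorphism outside degree $d+2$, the degree-$(d+2)$ correction of Lemma~\ref{lem:simplicialSubset} still applies, and the bound of Proposition~\ref{prop:P_d_homotopy_groups} for $\sk_{d+2}(P_{d+1})$ depends only on $\pi_2(X),\dots,\pi_{d+1}(X)$ and the truncation degree $d+2$ rather than on the chosen model, so the computation of Lemma~\ref{lem:bound} goes through and in fact yields the bound with $d$ in place of $d+1$.
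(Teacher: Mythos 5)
Your main argument is exactly the paper's proof: replace $X$ by a homotopy-equivalent finite CW complex of dimension $d+1$ (the paper cites Hatcher, Proposition 4C.1 — the homology-decomposition fact you describe), pass to a finite simplicial set of the same dimension by simplicial approximation, and apply Theorem \ref{thm:main}, using that $h$, $m$, $N$ are homotopy invariants. The alternative you sketch in your final sentence is a different (and potentially sharper) route that the paper does not take, but your primary argument matches the paper's, so the proposal is correct.
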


\begin{proof}
Since $X$ is a finite homotopy type, it has a finite simplicial set representative to which we can apply the main theorem. However, we also wish to remove the dependence on dimension.  To this end, note that $X$ is simply-connected and its homology is finitely generated, so there exists a finite CW complex homotopy equivalent to $X$ with dimension $d+1$ \cite[Proposition 4C.1]{Hatcher}. It then follows by simplicial approximation that there exists a finite simplicial set homotopy equivalent to this CW complex with the same dimension. Applying the main theorem to this simplicial set gives the desired result.
\end{proof}

\section{Future Directions}\label{sec:future}

In this section we consider potential future directions of our work on this problem. There are two main extensions that we wish to consider. For one, we would like to reduce the size of the simplicial set generated in Theorem \ref{thm:main}. We would also like to generalize the algorithm to work when $X$ has infinite homology groups, ideally finding a simplicial set which is homotopy equivalent fully rather than just $p$-locally. The latter direction is useful because it allows for the algorithm to be used in the calculation of the homotopy groups of any simply connected, finite simplicial set.

For both of the goals mentioned above, a potential strategy to explore centers on finding smaller simplicial set representatives for the Eilenberg-Maclane spaces $K(\pi, n)$. Since our size analysis centers on a twisted product of such spaces, this has a good chance of allowing us to reduce our size bound. In addition, the main obstacle to using our current strategy in the general case is that when a homotopy group has $\Z$ summands, the standard simplicial model of $K(\pi, n)$ is no longer levelwise finite, meaning that our current strategy yields an infinite simplicial set. Thus, finding better simplicial set representatives for these spaces would be very helpful for improving the results of this paper.

One method to consider in pursuit of this is inspired by the method used by Kr\v{c}ál et. al. in \cite{polyEM} to equip $K(\Z,1)$ with polynomial time homology (defined in \cite{alg}; the precise definition is not important for us). They use structures called discrete vector fields.
\begin{definition}
Let $X$ be a simplicial set, and let $\tau$ be a nondegenerate simplex of $X$. A \textit{regular face} of $\tau$ is a simplex $\sigma$ such that $\sigma=d_i\tau$ for exactly one value of $i$.
\end{definition}
\begin{definition}
Let $X$ be a simplicial set. A \textit{discrete vector field} on $X$ is a set of ordered pairs $(\sigma,\tau)$ such that $\sigma$ is a regular face of $\tau$ and such that no face appears in more than one ordered pair. The $\sigma$ faces are called \textit{sources}, the $\tau$ faces are called \textit{targets}, and the remaining faces are called \textit{critical faces}.
\end{definition}

The reason that these structures may yield results is due to their apparent relation to Kan completion. When the Kan completion functor is applied to a simplicial set, it fills in the horns that exist in the set; discrete vector fields seem to identify a way to ``un-fill-in" horns that had been filled in. This is promising because Kan completion produces a homotopy equivalent space, so going in the reverse direction may also yield a homotopy equivalent space with a smaller simplicial set representative. The concept of discrete vector fields may have to be refined further in order to achieve this, however.
\begin{comment}
It is also interesting to consider how reductions could be used to improve the results of this paper. A reduction $C_*\rightarrow D_*$ of chain complexes is a certain quasi-isomorphism. In \cite{alg}, reductions from chain complexes of Kan models of $K(\pi,n)$'s to much smaller chain complexes are used to calculate homology in polynomial time. Let $\Map_{\textbf{sSet}}(X,Y)$ denote the set of simplicial set maps from $X$ to $Y$. An observation is that since we have the correspondences

\begin{equation*}
    \Map_{\sSet}(X,K(\pi,n))\cong Z^n(X; \pi)
\end{equation*}

and

\begin{equation*}
    \Map_{\sSet}(X,E(\pi,n))\cong C^n(X; \pi)
\end{equation*}

one can note that the strong reductions on $K(\pi,n)$ and $E(\pi,n)$ can simplify simplicial maps from $X$ into these spaces. Such maps the $\varphi_i$'s that appear in the Postnikov tower as well as the Postnikov invariants $\k_i$'s.
\end{comment}

\bibliographystyle{amsalpha}
\bibliography{references}

\end{document}